\documentclass[a4paper,11pt]{article}       

\newcounter{CounterTodo}
\usepackage{xcolor}

\usepackage{graphicx}
\usepackage{booktabs}
\usepackage{hyperref}
\usepackage{enumerate}
\usepackage{algorithm}
\usepackage{algpseudocode}
\usepackage[normalem]{ulem}
\usepackage{authblk}
\usepackage{amsmath}
\usepackage{amsthm}
\usepackage{fullpage}
\usepackage{lineno}
\usepackage{setspace}
\usepackage{lineno}

\newtheorem{theorem}{Theorem}
\newtheorem{lemma}[theorem]{Lemma}
\newtheorem{corollary}[theorem]{Corollary}
\newtheorem{proposition}[theorem]{Proposition}
\newtheorem{observation}[theorem]{Remark}
\newtheorem{definition}{Definition}

\newcommand{\noc}{noc}
\newcommand{\noi}{noi}
\newcommand{\Noc}{\scriptsize\scriptsize\textsc{NOC}}

\title{Counting $P_3$-convex sets in graphs}

\author[1]{Mitre C.\ Dourado}
\author[2]{Luciano N.\ Grippo}
\author[3,4]{Min Chih Lin}
\author[5]{F{\'a}bio Protti}

\affil[1]{Instituto de Computa\c{c}\~ao, Universidade Federal do Rio de Janeiro, Brazil}
\affil[2]{Universidad Nacional de General Sarmiento. Instituto de Ciencias; Argentina. CONICET, ICI-UNGS, Buenos Aires, Argentina}
\affil[3]{CONICET, Instituto de C\'alculo, Buenos Aires, Argentina}
\affil[4]{Departamento de Computaci\'on, Universidad de Buenos Aires, Buenos Aires, Argentina}
\affil[5]{Instituto de Computa\c{c}\~ao, Universidade Federal Fluminense, Niter\'oi, Brazil}
\begin{document}

\onehalfspace

\maketitle
\begin{abstract}
	\textcolor{black}{
	We study the $P_3$-convexity, the path convexity generated by all three-vertex paths, and focus on the problem of counting the $P_3$-convex vertex sets of a graph $G$, denoted by $\noc(G)$. First, we settle the associated extremal question: we characterize the $n$-vertex graphs maximizing $\noc(G)$ among all graphs and determine the connected extremal graphs. Next, we investigate computational complexity and show that counting $P_3$-convex sets is $\#\mathsf{P}$-complete already on split graphs, even under additional structural restrictions. On the positive side, we identify two tractable subclasses, namely trees and threshold graphs, and obtain linear-time algorithms for both. Finally, we design nontrivial exact exponential-time algorithms for general graphs, combining structural decomposition, propagation rules capturing forced consequences of $P_3$-convexity, and fast counting of independent sets in auxiliary graphs. The resulting strategy becomes particularly effective on graph classes where large independent sets are guaranteed and can be found efficiently.
}
\end{abstract}

\medskip
\noindent\textbf{Keywords:}
$P_3$-convexity; counting $P_3$-convex sets; extremal graphs; $\#\mathsf{P}$-completeness; exact exponential-time algorithms.

\section{Introduction} \label{sec:int}
\subsection{State of the art}
\textcolor{black}{ Graph convexity has attracted increasing attention from both the mathematics and computer science communities, driven by a variety of challenging structural and algorithmic questions. A \emph{convexity} on a finite graph $G$ is a pair $(V(G), \mathcal{C})$, where $\mathcal{C}$ is a family of subsets of $V(G)$ satisfying the following conditions: $\emptyset \in \mathcal{C}$, $V(G) \in \mathcal{C}$, and $\mathcal{C}$ is closed under intersections; that is, $V_1 \cap V_2 \in \mathcal{C}$ for every $V_1, V_2 \in \mathcal{C}$. Each set in the family $\mathcal{C}$ is called a \emph{$\mathcal{C}$-convex set}. In 1986, Farber and Jamison published a seminal article on convexities in graphs from a theoretical and structural perspective~\cite{FarberJ1986}. The intense activity in this line of research created the need to compile a compendium of the most important published results. In 1993, van de Vel published a book on this topic~\cite{vandevel93}, and in 2013 Pelayo did so as well~\cite{Pelayo2013}. More recently, an algorithmic treatment of the topic has appeared in a book by Ara{\'u}jo, Dourado, Protti, and Sampaio~\cite{AraujoDouradoProttiSampaio2025IntroGraphConvexity}.}

\textcolor{black}{Many of the most studied graph convexities arise from suitable choices of families of paths in the graph. Let $\mathcal{P}$ be a set of paths in $G$. Let $\mathcal{C}$ be the family of all vertex subsets $S \subseteq V(G)$ such that, for every path $P \in \mathcal{P}$ with both endpoints in $S$, all vertices of $P$ also belong to $S$. It is easy to verify that $(V(G), \mathcal{C})$ defines a convexity on $G$, and $\mathcal{C}$ is called the \emph{path convexity generated by $\mathcal{P}$}. In the literature, several special families of paths $\mathcal{P}$ have been considered, such as shortest paths~\cite{Douchet1988, KanteMMS2019, AnandChChDHN2020} (geodetic convexity), triangle paths~\cite{DouradoS2016, ChangatM1999}, induced pahts (monophonic convexity)~\cite{DouradoPS2010, CostaDS2015, BenavidesCDRS2016}, all paths~\cite{ChangatKM2001},  paths on three vertices~\cite{BarbosaCDRS2012, CentenoPRP2013}, and induced paths on three vertices~\cite{AraujoSdosSS2018, DouradoPPR2022}.}

\textcolor{black}{The \emph{$P_3$-convexity} is the path convexity generated by all paths of length two, that is, by all three-vertex paths. A subset $S \subseteq V(G)$ is \emph{$P_3$-convex} if every vertex $v \in V(G)\setminus S$ has at most one neighbor in $S$. A precursor to this convexity can be found in the popular mathematics book by Bollob\'as (2006)~\cite{bollobas2006art}, where the author considers the $P_3$-convex hull on a grid in the context of studying the spread of a disease on a grid. In this paper, we focus on the study of this convexity.}

\textcolor{black}{The general problem of counting a given type of structure in a graphs is widespread in the research community. In 1979, Valiant published a pioneering article on the complexity theory of enumeration problems~\cite{Valiant1979}. Clearly, effectively listing all graphs (or sets) possessing a specific property $\pi$ yields, as a direct consequence, the exact count of such structures. Formally, an enumeration algorithm $\mathcal{A}$ outputs a sequence $G_1, G_2, \dots, G_k$ of all graphs satisfying $\pi$, such that $G_i \not\equiv G_j$ for $1 \leq i < j \leq k$. Various criteria exist to evaluate the efficiency of these algorithms \cite{Kimelfeld2005}. The least restrictive is \textit{polynomial total time}, where the execution time is bounded by a polynomial function of the combined size of all outputs $G_1, \dots, G_k$. A more stringent requirement is \textit{polynomial delay}, which dictates that the time elapsed between the generation of any two consecutive elements is polynomial only in the size of the subsequent output. Notable examples of algorithms achieving polynomial delay include those for generating all minimum spanning trees of a weighted graph \cite{Eppstein1995}, all maximal independent sets \cite{Lawler1980}, all elementary cycles \cite{Lauer1976}, and all cographs with $n$ vertices  \cite{Jones2018}. Furthermore, Courcelle \cite{Courcelle2009} established sufficient conditions to guarantee \textit{linear delay} when enumerating certain structures within graphs of bounded tree-width. A crucial issue in calculating the complexity of enumeration algorithms is the computational effort spent on avoiding the listing of duplicate elements. The algorithm described by \cite{Lawler1980} is a good example of how to handle duplicate discard while maintaining polynomial delay. }
	
\textcolor{black}{To the best of our knowledge, the problem of counting or enumerating convex sets has received little attention. A precursor to this kind of problem in the context of graph convexities can be found in~\cite{HaglinW1996}, where the authors study the convexity of directed three paths in tournaments by presenting an algorithm that finds all the convex sets in $O(n^4)$ time. For $\#\mathsf{P}$-complete problems, a reasonable alternative is to design exact exponential-time algorithm. Examples include $\#\textsc{Stable Sets}$~\cite{GaspersLee2023}, $\#2-\textsc{Sat}$ and $\#3-\textsc{Sat}$~\cite{DahllofVW2005, Kutzov2007}, and $\#\textsc{Perfect Matchings}$~\cite{BjorklundH2008}.}

\textcolor{black}{This article is devoted to the problem of counting the number of $P_3$-convex sets in a graph. We address the following two questions:
\begin{enumerate}
	\item Given a graph on $n$ vertices (connected or not), which graphs maximize the number of $P_3$-convex sets? (see Corollary~\ref{cor: general graphs} and Theorem~\ref{thm: optimum graph}).
	\item What is the computational complexity of counting the $P_3$-convex sets of a graph? (see Theorem~\ref{thm: hardness result} and Corollary~\ref{cor:oneinducedK14}).
\end{enumerate}}
\textcolor{black}{Since the answer to the second question turns out to be $\#\mathsf{P}$-complete, two further problems arise:
\begin{enumerate}
	\item Identify a graph class for which this problem can be solved in polynomial time (see Theorems~\ref{thm: counting P_3 convex sets in trees} and~~\ref{thm: thershold graphs}).
	\item Design a non-trivial exact exponential-time algorithm to count the $P_3$-convex sets of a graph (see Section~\ref{subsec:exact-algorithms}).
\end{enumerate}
In this article, we answer both questions stated above and propose solutions to the two problems that arise from the $\#\mathsf{P}$-completeness of the second one.}

\textcolor{black}{The article is organized as follows. In Section~\ref{sec: extremal problem} we address the extremal question and study which $n$-vertex graphs maximize the number of $P_3$-convex sets. We first develop a dynamic-programming approach for trees, then compare stars and paths, and finally characterize the connected extremal graphs. In Section~\ref{sec: Complexity results} we turn to computational complexity: we prove that counting $P_3$-convex sets is $\#\mathsf{P}$-complete (already on split graphs) and present a linear-time solvable case on threshold graphs. Finally, in Section~\ref{subsec:exact-algorithms} we develop exact exponential-time algorithms for general graphs, improving over the naive $O^*(2^n)$ algorithm via structural decomposition, propagation rules, and fast independent-set counting, and we discuss refinements for graph classes with guaranteed large independent sets.
}
%
%
\subsection{Basic definitions and notations}
\textcolor{black}{We consider finite, simple, and undirected graphs. For a graph $G$, we denote its vertex set by $V(G)$ and its edge set by $E(G)$. For a vertex $v\in V(G)$, the \emph{degree} of $v$ in $G$ is $d_G(v)=|N_G(v)|$ (or simply $d(v)$ when $G$ is clear from the context). The \emph{minimum degree} and \emph{maximum degree} of $G$ are
\[
\delta(G)=\min\{d_G(v): v\in V(G)\}
\qquad\text{and}\qquad
\Delta(G)=\max\{d_G(v): v\in V(G)\},
\]
respectively.}

\textcolor{black}{A \emph{subgraph} of $G$ is any graph $H$ such that $V(H)\subseteq V(G)$ and $E(H)\subseteq E(G)$. For $U\subseteq V(G)$, the \emph{subgraph of $G$ induced by $U$} is denoted by $G[U]$; it has vertex set $U$ and edge set $\{xy\in E(G): x,y\in U\}$. Any graph $H$ with $V(H)\subseteq V(G)$ and $E(H)\subseteq E(G)$ is a \emph{subgraph} of $G$, and it is \emph{induced} if $H=G[V(H)]$. A \emph{spanning subgraph} of $G$ is a subgraph $H$ with $V(H)=V(G)$. A \emph{tree} is a connected acyclic graph, and a \emph{spanning tree} of $G$ is a spanning subgraph of $G$ that is a tree.}

\textcolor{black}{For a vertex $v\in V(G)$, the \emph{(open) neighborhood} of $v$ in $G$ is $N_G(v)=\{u\in V(G): uv\in E(G)\}$, and its \emph{closed neighborhood} is $N_G[v]=N_G(v)\cup\{v\}$. A set $I\subseteq V(G)$ is an \emph{independent set} if no two vertices of $I$ are adjacent. A \emph{clique} in a graph $G$ is a set of vertices $C\subseteq V(G)$ such that every two distinct vertices in $C$ are adjacent.}

\textcolor{black}{For graph operations, $G-v$ denotes the graph obtained from $G$ by deleting the vertex $v$ (and all edges incident to $v$). More generally, for $S\subseteq V(G)$, we write $G-S$ for the graph obtained by deleting all vertices of $S$. Similarly, for an edge $e\in E(G)$, $G-e$ denotes the graph obtained by deleting $e$, and for $F\subseteq E(G)$, we write $G-F$ for the graph obtained by deleting all edges in $F$. The \emph{disjoint union} of graphs $G$ and $H$ is denoted by $G+H$. Let $s\cdot G$ denote the disjoint union of $s$ copies of $G$.}

\textcolor{black}{We use standard notation for some families of graphs: $K_n$ is the complete graph on $n$ vertices, $C_n$ is the cycle on $n$ vertices, and $P_n$ is the path on $n$ vertices. The complete bipartite graph with parts of size $a$ and $b$ is denoted by $K_{a,b}$; in particular, the \emph{star} on $n$ vertices is $K_{1,n-1}$.}

\textcolor{black}{A graph $G$ is a \emph{split graph} if its vertex set can be partitioned as
$V(G)=K\cup S$, where $K$ induces a clique and $S$ induces an independent set. Any such partition $(K,S)$ is called a \emph{split partition} of $G$.}

\section{Extremal problem}\label{sec: extremal problem}
\textcolor{black}{The graphs attaining the maximum number of convex sets are exactly the $P_3$-free graphs, in which every subset of vertices is convex.
	In contrast, the existence of an induced $P_3$ with vertices $u - v - w$ implies that $\{u,w\}$ fails to be convex, since convexity forces the inclusion of $v$.
	For connected graphs, the only $P_3$-free graphs are the complete graphs, which are therefore the unique extremal connected graphs.}

We begin by recalling the notion of $P_3$-convexity and introducing the notation
$\noc(G)$.

\begin{definition}[$P_3$-convexity and the function $\noc(G)$]
Let $G=(V,E)$ be a graph.  
A set $S \subseteq V$ is \emph{$P_3$-convex} if for every path $x$--$z$--$y$ in $G$
with $x,y \in S$, we also have $z\in S$.  
We denote by $\noc(G)$ the number of $P_3$-convex sets of $G$, that is,
\[
\noc(G)
 \;=\;
\bigl|\{\, S \subseteq V : S \text{ is $P_3$-convex in } G \,\}\bigr|.
\]
\end{definition}
We also use the basic color interpretation:
vertices in a $P_3$-convex set are considered \emph{black},  
and vertices outside the set are considered \emph{white}.  
This two-color viewpoint will later be refined for rooted trees.

\textcolor{black}{We begin by studying the connected graphs on $n$ vertices that maximize the number of $P_3$-convex sets. First, we observe the crucial property that this number is nondecreasing under edge deletions.}
\begin{lemma} \label{lem:deleteEdge}
Let $G$ be a graph and let $uv\in E(G)$ be an edge. 
Then
\[
\noc(G - uv) \;\ge\; \noc(G).
\]
\end{lemma}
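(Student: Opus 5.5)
The plan is to show directly that the family of $P_3$-convex sets of $G$ is contained in the family of $P_3$-convex sets of $G-uv$. Since both graphs have the same vertex set $V$, the identity map $S\mapsto S$ is then an injection between the two families, and $\noc(G-uv)\ge\noc(G)$ follows immediately.

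Fix a set $S\subseteq V$ that is $P_3$-convex in $G$, and take any path $x$--$z$--$y$ in $G-uv$ with $x,y\in S$. The edges $xz$ and $zy$ lie in $E(G-uv)\subseteq E(G)$, so $x$--$z$--$y$ is also a path in $G$. Convexity of $S$ in $G$ then gives $z\in S$. Hence $S$ is $P_3$-convex in $G-uv$.

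Equivalently, one can use the neighbourhood formulation from the introduction. Every vertex outside $S$ has at most one neighbour in $S$ in $G$. Since $N_{G-uv}(w)\subseteq N_G(w)$ for every $w$, this property persists in $G-uv$.

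I do not expect a genuine obstacle. The only point needing care is that deleting an edge can only destroy three-vertex paths and never create them, which the containment of edge sets settles. The inclusion may be strict: $\{u,v\}$ may become convex after the deletion. This does not matter, since only the inequality is claimed.
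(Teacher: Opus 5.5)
Your proof is correct and follows the same route as the paper, which states the lemma as an immediate observation: every three-vertex path of $G-uv$ is a three-vertex path of $G$, so each $P_3$-convex set of $G$ stays $P_3$-convex in $G-uv$. One correction to your closing aside: $\{u,v\}$ is convex in $G$ if and only if $u$ and $v$ have no common neighbour, and deleting $uv$ does not change that, so $\{u,v\}$ never becomes convex; a genuine witness of strictness is $\{x,v\}$ in the path $x$--$u$--$v$ (compare the set $V(C)\setminus\{u\}$ used in Lemma~\ref{lem: unicylic graphs}).
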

%
%
\textcolor{black}{
Applying Lemma~\ref{lem:deleteEdge} iteratively, one sees that edgeless graphs attain the maximum possible number of $P_3$-convex sets. However, they are not the only extremal graphs: any graph $G$ with maximum degree $\Delta(G)\le 1$ is also extremal. Indeed, such graphs are $P_3$-free and each of their connected components has at most two vertices; consequently, every subset of vertices is $P_3$-convex. In contrast, any graph with maximum degree at least $2$ contains either an induced $P_3$ or a triangle. In both cases there exist vertices $u$ and $w$ with a common neighbor $v$, implying that the set $\{u,w\}$ is not $P_3$-convex. Thus graphs with $\Delta(G)\le 1$ are exactly the extremal ones. Hence, the following result holds
\begin{corollary}\label{cor: general graphs}
	Let $G$ be a graph on $n$ vertices. Then $\noc(G)\le 2^n$. Moreover, equality holds if and only if $G$ is isomorphic to $s\cdot K_1 + t\cdot K_2$ with $2t + s = n$.
\end{corollary}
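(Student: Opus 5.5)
The bound $\noc(G)\le 2^n$ is immediate, because the $P_3$-convex sets form a subfamily of the $2^n$ subsets of $V(G)$. Lemma~\ref{lem:deleteEdge} is not needed for this part, although it is consistent with it: deleting edges one at a time never decreases $\noc$, and the edgeless graph has every subset convex. So the work is entirely in characterizing equality. The plan is to show that $\noc(G)=2^n$ holds if and only if every subset of $V(G)$ is $P_3$-convex, and that this holds if and only if $\Delta(G)\le 1$.

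For sufficiency, suppose $G\cong s\cdot K_1+t\cdot K_2$. Then every vertex has degree at most one, so $G$ contains no path $x$--$z$--$y$ with $x\ne y$. The defining condition of $P_3$-convexity is therefore vacuous for every $S\subseteq V(G)$, and $\noc(G)=2^n$.

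For necessity, I argue by contraposition. Suppose some vertex $v$ has two distinct neighbours $u$ and $w$. Then $u$--$v$--$w$ is a path on three vertices, whether or not $uw\in E(G)$. The set $\{u,w\}$ contains both endpoints of this path but not $v$, so it is not $P_3$-convex, and hence $\noc(G)\le 2^n-1$. Consequently, equality forces $\Delta(G)\le 1$.

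It remains to see that a graph with $\Delta(G)\le 1$ is a disjoint union of isolated vertices and isolated edges. Every connected component of such a graph is either $K_1$ or $K_2$, so $G\cong s\cdot K_1+t\cdot K_2$, and counting vertices gives $s+2t=n$.

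I do not expect a genuine obstacle. The only point that needs care is to use the paper's definition with all three-vertex paths, not only induced ones, so that a triangle also witnesses non-convexity.
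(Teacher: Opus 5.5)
Your proof is correct and follows essentially the same argument as the paper: when $\Delta(G)\le 1$ there is no three-vertex path, so every subset is convex, and a vertex with two neighbours $u,w$ makes $\{u,w\}$ non-convex (the paper splits this into the induced-$P_3$ and triangle cases, which you handle in one step). Your remark that Lemma~\ref{lem:deleteEdge} is not needed for $\noc(G)\le 2^n$ is also accurate; the paper cites it only to observe that edgeless graphs attain the maximum.
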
}
\textcolor{black}{We now turn our attention to connected extremal graphs, and conclude by presenting the main result of this section in Subsection~\ref{subsec:extremal}.
The concepts introduced and the results established in the preceding subsections together lead to their characterization.}
\subsection{Trees} \label{subec:trees}
For trees, \textcolor{black}{enumerating $P_3$-convex sets} admits a \textcolor{black}{simple recursive} formulation. Given a tree $T$, we choose an arbitrary root and orient all edges away from it. Each vertex defines the \textcolor{black}{unique subtree induced by its descendants}. \textcolor{black}{We will refine the white color into two states, depending on whether a white vertex has a black parent, as follows.}
\[
\mathsf{B}\ (\text{black}), \qquad
\mathsf{G}\ (\text{white with a black parent}), \qquad
\mathsf{W}\ (\text{white with no black parent}).
\]
\textcolor{black}{Given a directed rooted tree $T$, let $\mathrm{root}(T)$ denote its root.}
\begin{definition}[Auxiliary function $\noc(T,\alpha)$]
	Let $T$ be a rooted \textcolor{black}{directed} tree.  
	For $\alpha \in \{\mathsf{B},\mathsf{G},\mathsf{W}\}$,  \textcolor{black}{let
	$\noc(T,\alpha)$ denote} the number of $P_3$-convex sets of $T$ in which $\mathrm{root}(T) \text{ has color }\alpha$.
	\end{definition}
	\textcolor{black}{Since $T$ is a tree, every $P_3$ is necessarily induced. The value $\noc(T,\alpha)$ can be computed recursively from the subtrees rooted at the children of $\mathrm{root}(T)$.}
\begin{algorithm}[H]
\caption{Computing $\noc(T,\alpha)$}\label{algo: P_3 convex sets in trees}
\begin{algorithmic}[1]
\Function{noc}{$T$, $\alpha$}
    \If{$T$ is trivial}
        \State \Return $1$
    \EndIf

    \State $num \gets 1$
    \State $numw \gets 1$
    \State Let $T_1,\dots,T_k$ be the subtrees of $T - \mathrm{root}(T)$

    \For{$i = 1$ to $k$}
        \If{$\alpha = \mathsf{B}$}
            \State $num \gets num \cdot (\Noc(T_i,\mathsf{G}) + \Noc(T_i,\mathsf{B}))$

        \ElsIf{$\alpha = \mathsf{G}$}
            \State $num \gets num \cdot \Noc(T_i,\mathsf{W})$

        \ElsIf{$\alpha = \mathsf{W}$}
            \State $nb[i] \gets \Noc(T_i,\mathsf{B})$
            \State $nw[i] \gets \Noc(T_i,\mathsf{W})$
            \State $numw \gets numw \cdot nw[i]$
        \EndIf
    \EndFor

    \If{$\alpha = \mathsf{W}$}
        \State $num \gets numw$
        \For{$i = 1$ to $k$}
            \State $num \gets num + (numw / nw[i]) \cdot nb[i]$
        \EndFor
    \EndIf

    \State \Return $num$
\EndFunction
\end{algorithmic}
\end{algorithm}
\textcolor{black}{We omit the details of the correctness proof of Algorithm~\ref{algo: P_3 convex sets in trees} in computing $\noc(T,\alpha)$}. Since the root cannot be colored $\mathsf{G}$ (\textcolor{black}{that is, a} white vertex with a black parent),
the total number of $P_3$-convex sets of $T$ is
\[
\noc(T) = \noc(T,\mathsf{B}) + \noc(T,\mathsf{W}).
\]
\textcolor{black}{Additionally, Algorithm~\ref{algo: P_3 convex sets in trees} process each subtree $T_i$ once in a bottom–up traversal of $T$. Therefore, we obtain the following complexity result.	
\begin{theorem}\label{thm: counting P_3 convex sets in trees}
	The number of $P_3$-convex sets of a tree $T = (V,E)$ can be computed in $O(|V| + |E|)$ time.   
\end{theorem}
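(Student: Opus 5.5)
The proof has two parts: correctness of the recursion in Algorithm~\ref{algo: P_3 convex sets in trees}, and a linear bound on its running time. Root $T$ at an arbitrary vertex $r$. Since $T$ is a tree, a set $S$ is $P_3$-convex if and only if every white vertex has at most one black neighbour. Each white vertex $v$ has neighbours consisting of its parent (if any) and its children. So the condition splits by state. A $\mathsf{G}$-vertex (white, black parent) must have no black child. A $\mathsf{W}$-vertex (white, parent white or absent) may have at most one black child. A black vertex imposes no condition on itself, but each of its children is either black or a white vertex with black parent, i.e.\ in state $\mathsf{B}$ or $\mathsf{G}$.

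For correctness I would proceed by induction on $|V(T)|$, proving that the recursion returns the number of colourings of $T$ that satisfy the local constraint at every vertex, with the root forced to state $\alpha$. For $\alpha=\mathsf{G}$, the constraint at the root is read as ``the parent is black'', which is external to $T$. The key observation is that the state of a child $c$ is determined by the colour of $c$ together with the colour of the root. Hence, once the root's state is fixed, the admissible states of the children are fixed, and the subtrees $T_i$ become independent. This gives the following counts.
\begin{itemize}
\item If $\alpha=\mathsf{B}$: a product over $i$ of $\noc(T_i,\mathsf{B})+\noc(T_i,\mathsf{G})$.
\item If $\alpha=\mathsf{G}$: a product of $\noc(T_i,\mathsf{W})$, since every child must be white and its parent is white.
\item If $\alpha=\mathsf{W}$: a sum over ``no black child'' and ``exactly child $i$ black''. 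This gives $\prod_j \noc(T_j,\mathsf{W}) + \sum_i \noc(T_i,\mathsf{B})\prod_{j\ne i}\noc(T_j,\mathsf{W})$.
\end{itemize}
The base case is a single vertex, which gives $1$ in every state. Finally, $\noc(T)=\noc(T,\mathsf{B})+\noc(T,\mathsf{W})$, since the root has no parent and so is never in state $\mathsf{G}$.

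For the running time, evaluate the three values $\noc(\cdot,\alpha)$ for all rooted subtrees bottom-up, e.g.\ in post-order after a BFS or DFS from $r$. This costs $O(|V|)$. A vertex with $k$ children then needs $O(k)$ arithmetic operations, provided the $\mathsf{W}$-case sum is computed without recomputing products. Summing over all vertices gives $O(\sum_v (1+\deg v))=O(|V|+|E|)$, assuming unit-cost arithmetic on the counts, as is implicit in the statement.

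The main obstacle is the division $numw/nw[i]$ in the $\mathsf{W}$ case. It is only valid if $\noc(T_i,\mathsf{W})\neq 0$, which holds because the all-white colouring is always valid, so $\noc(T_i,\mathsf{W})\ge 1$. To avoid relying on exact division altogether, I would instead compute the sum by a linear scan maintaining two quantities: the product $P$ of the $\mathsf{W}$-values seen so far, and the sum $Q$ of the terms with exactly one black child. Each child updates them by $Q\gets Q\cdot nw[i]+P\cdot nb[i]$ and then $P\gets P\cdot nw[i]$. This also gives $O(k)$ time per vertex and completes the linear bound.
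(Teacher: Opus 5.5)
Your proposal is correct and follows the paper's approach. It uses the same three-state ($\mathsf{B}$, $\mathsf{G}$, $\mathsf{W}$) bottom-up dynamic program of Algorithm~\ref{algo: P_3 convex sets in trees}, with $\noc(T)=\noc(T,\mathsf{B})+\noc(T,\mathsf{W})$ and $O(\deg v)$ work per vertex. You also supply details the paper omits: the inductive correctness argument, the justification that the division $numw/nw[i]$ is safe because $\noc(T_i,\mathsf{W})\ge 1$ (or avoidable via your prefix scan), and the implicit unit-cost arithmetic assumption.
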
}
\textcolor{black}{We will revisit the complexity aspects of enumerating $P_3$-convex sets in Section~\ref{sec: Complexity results}.}
%
%
%
%
%
%
%
%
\subsection{Stars $K_{1,n-1}$ and Paths $P_n$}
\textcolor{black}{The following technical lemma, whose proof is straightforward  and therefore omitted, gives the number of $P_3$-convex sets of the candidate graph on $n$ vertices that maximizes this quantity.}
\begin{lemma}\label{lem:star}
	For $n \ge 1$, it holds that $\noc(K_{1,n-1}) = 2^{n-1} + n$.
\end{lemma}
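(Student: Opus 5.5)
The plan is to classify subsets $S\subseteq V(K_{1,n-1})$ by whether they contain the center $c$, and count each class directly from the definition (a set is $P_3$-convex exactly when every vertex outside it has at most one neighbor inside it). Write $L$ for the set of $n-1$ leaves. For $n=1$ there is a single vertex, both subsets are convex, and $2^0+1=2$. For $n=2$ the graph is $K_2$, which is $P_3$-free, so all $4=2^1+2$ subsets are convex, in agreement with Corollary~\ref{cor: general graphs}. From now on assume $n\ge 3$, although the argument below in fact works for all $n\ge 1$.

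\emph{Sets containing $c$.} If $c\in S$, every vertex outside $S$ is a leaf, and a leaf has only the single neighbor $c$. Hence every such vertex has exactly one neighbor in $S$, and the condition holds automatically. So every set $S=\{c\}\cup A$ with $A\subseteq L$ is convex, which gives $2^{n-1}$ sets.

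\emph{Sets not containing $c$.} If $c\notin S$, then $S\subseteq L$. Each leaf outside $S$ has no neighbor in $S$, since its only neighbor $c$ is not in $S$. The only constraint therefore comes from $c$: it has exactly $|S|$ neighbors in $S$, so $S$ is convex if and only if $|S|\le 1$. Equivalently, two leaves $u,w\in S$ would force $c$ through the path $u$--$c$--$w$. This class consists of $\emptyset$ and the $n-1$ singletons of leaves, that is, $1+(n-1)=n$ sets.

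Adding the two disjoint classes gives $\noc(K_{1,n-1})=2^{n-1}+n$. As a cross-check, Algorithm~\ref{algo: P_3 convex sets in trees} rooted at $c$ gives $\noc(T,\mathsf{B})=\prod(1+1)=2^{n-1}$ and $\noc(T,\mathsf{W})=1+(n-1)=n$. There is no real obstacle here. The only step needing care is the case split, specifically confirming that a center-free set with at least two leaves violates convexity while leaves outside $S$ impose no condition.
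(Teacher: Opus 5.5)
Your proof is correct and complete, and it is essentially the straightforward count the paper leaves out. Splitting by whether the center $c$ lies in the set gives $2^{n-1}$ convex sets containing $c$ and $n$ convex sets avoiding it (the empty set and the $n-1$ leaf singletons); this is the same as the root-state split $\noc(T,\mathsf{B})=2^{n-1}$, $\noc(T,\mathsf{W})=n$ of Algorithm~\ref{algo: P_3 convex sets in trees}, and you are right that the cases $n\le 2$ need no separate treatment.
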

%
%
\textcolor{black}{The next result rules out unicyclic graphs on $n$ vertices (connected graph with the same number of vertices and edges) as candidates for maximizing the number of $P_3$-convex sets among connected graphs on $n$ vertices with $n\ge 3$.}
\begin{lemma}~\label{lem: unicylic graphs}
	Let $G=(V,E)$ be a connected graph with $|V|=|E|=n\ge 3$.  
	Then for every spanning tree $T$ of $G$,
	\[
	\noc(G) < \noc(T).
	\]
\end{lemma}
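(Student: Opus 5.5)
A spanning tree $T$ of a unicyclic graph $G$ is $G-e$ for some edge $e=uv$ lying on the unique cycle $C$. Lemma~\ref{lem:deleteEdge} already gives $\noc(G)\le\noc(T)$, so the task is to upgrade this to a strict inequality. I will exhibit one set that is $P_3$-convex in $T$ but not in $G$.

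First I recall why the inequality holds. In $G-uv$ the three-vertex paths are exactly those of $G$ that do not use the edge $uv$. Hence every set that is convex in $G$ is convex in $G-uv$. The convex sets of $G$ therefore form a subfamily of those of $T$, and it suffices to find a set $S\subseteq V$ that is convex in $T$ but not in $G$.

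Let $C=v_1v_2\cdots v_k v_1$ with $k\ge 3$, and choose the labelling so that $u=v_1$ and $v=v_k$. Then $T$ contains the path $v_1v_2\cdots v_k$ and no other $v_1$--$v_k$ path. I take $S=\{v_2,v_k\}$, noting that $v_2\neq v_k$ because $k\ge 3$.

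\begin{itemize}
\item $S$ is not convex in $G$: the vertex $v_1\notin S$ is adjacent in $G$ to both $v_2$ and $v_k$, so the path $v_2$--$v_1$--$v_k$ forces $v_1$ into $S$.
\item $S$ is convex in $T$: a three-vertex path in $T$ with both ends in $S$ would need a common $T$-neighbour of $v_2$ and $v_k$. Such a vertex $z$ would give the $T$-path $v_2 z v_k$ of length $2$. But $T$ is a tree, so the unique $v_2$--$v_k$ path in $T$ is $v_2v_3\cdots v_k$, of length $k-2$.
  \begin{itemize}
  \item If $k\ge 5$, this length is at least $3$, a contradiction, so no such $z$ exists.
  \item If $k=4$, the path is $v_2v_3v_4$ and $v_3$ is a common $T$-neighbour. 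So $S$ must be modified.
  \end{itemize}
\end{itemize}

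To cover all cycle lengths uniformly I switch to $S=\{v_2,v_k\}\cup\{v_3,\dots,v_{k-1}\}=V(C)\setminus\{v_1\}$. In $G$ this set is still not convex, since $v_1\notin S$ has the two neighbours $v_2,v_k\in S$. It remains to check that $S$ is convex in $T$, which I do via the equivalent criterion that every vertex outside $S$ has at most one $T$-neighbour in $S$.
\begin{itemize}
\item The vertex $v_1$: its only $T$-neighbour on $C$ is $v_2$, because the edge $v_1v_k$ was removed. Its other neighbours lie off the cycle.
\item A vertex $w\notin V(C)$: it has at most one neighbour on $C$. Two such neighbours would create a second cycle in $G$, which is impossible in a unicyclic graph.
\end{itemize}
Hence $S$ is convex in $T$ and not in $G$, which gives $\noc(G)<\noc(T)$.

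The step that needs the most care is this last one: verifying that $V(C)\setminus\{v_1\}$ really is convex in $T$, in particular that vertices off the cycle have at most one neighbour on $C$. This is exactly where unicyclicity, that is $|E|=|V|$, is used.
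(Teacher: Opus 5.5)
Your proof is correct and follows the paper's argument: Lemma~\ref{lem:deleteEdge} gives $\noc(G)\le\noc(T)$, and the witness $S=V(C)\setminus\{u\}$ is convex in $T$ but not in $G$, since $u$ has two neighbours in $S$ on the cycle. The differences are cosmetic. You check convexity in $T$ via unicyclicity of $G$ (no chords, and no off-cycle vertex with two neighbours on $C$), whereas the paper appeals directly to acyclicity of $T$. Your initial detour through $\{v_2,v_k\}$ can simply be dropped.
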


\begin{proof}
	Since $G$ is connected and $3\ge |V|=|E|$, it contains exactly one cycle. Let \textcolor{black}{$C$} be the cycle, and let $uv$ be any edge of $C$. Then $T=G-uv$ is a spanning tree of $G$. Indeed, every spanning tree of $G$ can be obtained by deleting exactly one edge of its unique cycle.

	\textcolor{black}{By Lemma~\ref{lem:deleteEdge}, we have $\noc(G)\le \noc(T)$}. To prove strict inequality, consider
	\[
	S = V(C)\setminus\{u\}.
	\]
	\textcolor{black}{On one hand, since $T$ is acyclic, any vertex $z\notin S$ has at most one neighbor in $S$. Hence, no $P_3$ with endpoints in $S$ has its middle vertex outside $S$, and therefore, $S$ is $P_3$-convex in $T$. On the other hand, in $G$ the vertex $u$ has two neighbors in $S$ along the cycle $C$, and therefore any convex set of $G$ containing those two neighbors must also contain $u$. Consequently, $S$ is not $P_3$-convex in $G$.}

	In summary, $T$ has a $P_3$-convex set that $G$ does not, implying $\noc(G)<\noc(T)$.
\end{proof}
\textcolor{black}{We can now derive the following result, which guarantees that any $n$-vertex connected graph maximizing the number of $P_3$-convex sets must be a tree.}
\begin{corollary}\label{cor:cycle}
Let $G=(V,E)$ be any connected graph with $|E|\ge |V|\ge 3$.  
Then every spanning tree $T$ of $G$ satisfies
\[
\noc(G) < \noc(T).
\]
\end{corollary}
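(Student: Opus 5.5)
Fix a spanning tree $T$ of $G$ and let $F = E(G)\setminus E(T)$ be the set of non-tree edges. Since $|E(G)|\ge |V(G)| = |E(T)|+1$, we have $|F|\ge 1$. The plan is to delete the edges of $F$ from $G$ one at a time. We reach $T$ through a chain of connected spanning subgraphs, each containing $T$. Lemma~\ref{lem:deleteEdge} shows that $\noc$ never decreases along this chain. It then remains to locate one step where the inequality is strict.

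The cleanest route is to insert a unicyclic graph. Choose one edge $e\in F$ and set $H = T + e$. Then $H$ is connected, has $|V(H)|=|E(H)|=n\ge 3$, and has $T$ as a spanning tree. Lemma~\ref{lem: unicylic graphs} therefore gives $\noc(H)<\noc(T)$.

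Next we pass from $G$ to $H$. We have $H = G - (F\setminus\{e\})$, so $H$ arises from $G$ by deleting the remaining edges of $F$ one by one. Applying Lemma~\ref{lem:deleteEdge} at each deletion gives $\noc(G)\le \noc(H)$. If $F=\{e\}$ then $G=H$ and this step is trivial. Combining the two steps,
\[
\noc(G)\le \noc(H) < \noc(T),
\]
which is the claimed strict inequality.

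The only point needing care is the application of Lemma~\ref{lem: unicylic graphs}. We must check that $H$ is connected and that $T$ is a spanning tree of $H$, both immediate since $T\subseteq H$ spans $V(G)$. We must also check that $n\ge 3$ holds, which is part of the hypothesis. Lemma~\ref{lem:deleteEdge} requires no connectivity assumption, so the intermediate graphs in the deletion chain need no further checking.
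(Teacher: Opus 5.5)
Your proposal is correct and is essentially the paper's proof: you insert the unicyclic graph $T+e$, use Lemma~\ref{lem:deleteEdge} to get $\noc(G)\le\noc(T+e)$, and use Lemma~\ref{lem: unicylic graphs} for the strict inequality $\noc(T+e)<\noc(T)$. The only difference is cosmetic: you absorb the case $|E|=|V|$ (where $G=T+e$) into the general argument, whereas the paper treats it as a separate case.
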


\begin{proof}
	\textcolor{black}{If $|E|=|V|$, then the statement follows from Lemma~\ref{lem: unicylic graphs}.}

	\textcolor{black}{Assume now that  $|E| > |V|$.  Let $T=(V,E_T)$ be any spanning tree of $G$. Since $E_T\subset E$ and $|E|>|V|$, the set $E\setminus E_T$ is nonempty. Pick an edge $e\in E\setminus E_T$ and consider the spanning subgraph
	\[
	G'=(V,\, E_T \cup \{e\}).
	\]
	}
	\textcolor{black}{The graph $G'$ is unicyclic. Thus, by combining Lemmas~\ref{lem:deleteEdge} and~\ref{lem: unicylic graphs}, we obtain
	\[
	\noc(G)\le\noc(G') < \noc(T),
	\]
	and therefore the result hods.}
%
%
\end{proof}
We derive a simple recurrence for $Z_n=\noc(P_n)$, where $P_n$ is the path on $n$ vertices. Root $P_n$ at one of its endpoints and \textcolor{black}{set}
\[
B_n = \noc(P_n,\mathsf{B}), \qquad
W_n = \noc(P_n,\mathsf{W}), \qquad
G_n = \noc(P_n,\mathsf{G}),
\]
where $\mathsf{B}$, $\mathsf{W}$, and $\mathsf{G}$ indicate, respectively, that the root is black, white with no black parent, and white with a black parent. Since the root has no parent, it cannot be in state $\mathsf{G}$, and therefore
\[
Z_n = \noc(P_n) = B_n + W_n.
\]
\paragraph{Recurrence relations.}
Because every vertex of a path has at most one child, the general tree
recurrences simplify to
\[
B_n = B_{n-1} + G_{n-1}, \qquad
W_n = W_{n-1} + B_{n-1}, \qquad
G_n = W_{n-1},
\]
with initial values
\[
B_1 = W_1 = G_1 = 1.
\]
Eliminating $B_n$, $W_n$, and $G_n$ yields the linear recurrence
\[
\boxed{
Z_n = 2Z_{n-1} - Z_{n-2} + Z_{n-3}, \qquad n\ge 4,
}
\]
with
\[
Z_1=2,\qquad Z_2=4,\qquad Z_3=7.
\]
We compare $Z_n$ with the number of $P_3$-convex sets of the star $K_{1,n-1}$.
\textcolor{black}{By} Lemma~\ref{lem:star},
\[
\noc(K_{1,n-1}) = 2^{\,n-1} + n.
\]
\textcolor{black}{Therefore, the following proposition holds; its proof is straightforward by induction on $n$ for $n\ge 6$, and the remaining cases are covered in Table~\ref{table: first 10 noc of P_n}}
\begin{proposition}\label{pro:path}
	For every integer $n\ge 1$,
	\[
	\noc(P_n) \;\le\; \noc(K_{1,n-1}) = 2^{\,n-1} + n,
	\]
	and equality holds if and only if $n\le 5$.
\end{proposition}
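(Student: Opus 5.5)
The plan is to compare the path counts $Z_n=\noc(P_n)$ with the star counts $S_n=2^{n-1}+n$ from Lemma~\ref{lem:star}, using the linear recurrence $Z_n=2Z_{n-1}-Z_{n-2}+Z_{n-3}$ ($n\ge 4$) derived above. I would not bound $Z_n$ directly. Instead I would work with the difference
\[
D_n \;=\; S_n - Z_n,
\]
and show that $D_n=0$ for $n\le 5$ and $D_n\ge 1$ for $n\ge 6$. This gives both the inequality and the equality characterization.

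\textbf{Small cases.} From $Z_1=2$, $Z_2=4$, $Z_3=7$ and the recurrence we get $Z_4=12$, $Z_5=21$ and $Z_6=37$. On the star side, $S_1,\dots,S_6$ equal $2,4,7,12,21,38$. Hence $D_1=\dots=D_5=0$ and $D_6=1$. These are the entries of Table~\ref{table: first 10 noc of P_n}.

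\textbf{Recurrence for $D_n$.} Next I would compute how far $S_n$ is from satisfying the path recurrence. A direct calculation gives
\[
2S_{n-1}-S_{n-2}+S_{n-3} \;=\; 2^{n-1}-2^{n-4}+2n-3,
\]
so
\[
S_n-\bigl(2S_{n-1}-S_{n-2}+S_{n-3}\bigr) \;=\; 2^{n-4}-n+3 \;=:\; e_n .
\]
One checks that $e_4=e_5=0$ and that $e_n>0$ for $n\ge 6$. Subtracting the path recurrence yields
\[
D_n \;=\; 2D_{n-1}-D_{n-2}+D_{n-3}+e_n .
\]

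\textbf{Main step: monotonicity of $D_n$.} The obstacle is the negative coefficient in front of $D_{n-2}$, which blocks a naive positivity induction. I would remove it by rewriting the recurrence in terms of consecutive differences:
\[
D_n-D_{n-1} \;=\; (D_{n-1}-D_{n-2})+D_{n-3}+e_n .
\]
I would then prove by strong induction on $n\ge 4$ that
\[
D_n\ge D_{n-1}\qquad\text{and}\qquad D_n\ge 0 .
\]
The base values are $D_3=D_4=D_5=0$. In the inductive step all three terms on the right-hand side are nonnegative: the first by the induction hypothesis, the second since $D_{n-3}\ge 0$, and the third since $e_n\ge 0$. Hence $D_n\ge D_{n-1}$.

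Combining this with $D_6=1$ gives $D_n\ge D_6=1>0$ for all $n\ge 6$. Therefore $\noc(P_n)<\noc(K_{1,n-1})$ for $n\ge 6$, while equality holds for $n\le 5$, as the statement asserts.
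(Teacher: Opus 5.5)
Your proof is correct and follows the paper's approach: an induction on $n$ driven by the path recurrence for $n\ge 6$, with the cases $n\le 5$ read off Table~\ref{table: first 10 noc of P_n}. The paper only calls this induction ``straightforward'' and omits it, so your passage to $D_n = 2^{n-1}+n-\noc(P_n)$, with the error term $e_n = 2^{n-4}-n+3$ and the consecutive-difference form of the recurrence, supplies the missing detail of how to handle the negative coefficient of the $n-2$ term; I checked these computations and they are right.
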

%
%
%
%
%
%
%
%
%
\begin{table}[H]
\centering
\begin{tabular}{c|cccccccccc}
$n$                 & 1 & 2 & 3 & 4  & 5  & 6  & 7  & 8   & 9   & 10  \\ \hline
$\noc(P_n)$         & 2 & 4 & 7 & 12 & 21 & 37 & 65 & 114 & 200 & 351 \\
$\noc(K_{1,n-1})$   & 2 & 4 & 7 & 12 & 21 & 38 & 71 & 136 & 265 & 522 \\
\end{tabular}
\caption{Comparison of $\noc(P_n)$ with $\noc(K_{1,n-1}) = 2^{\,n-1} + n$.}\label{table: first 10 noc of P_n}
\end{table}
\subsection{Connected extremal graphs with maximum $\noc$} \label{subsec:extremal}
\textcolor{black}{We need only three ingredients, stated below, to be ready to state and prove the main result of this section. The first two facts can be follow directly from the definitions of a tree and a $P_3$-convex set.}
\begin{proposition}\label{prop: technical}
Let $T$ be a tree on $n$ vertices.  
Then $T$ satisfies at least one of the following conditions:
\begin{enumerate}
    \item $T$ is the star $K_{1,n-1}$;
    \item $T$ is the induced path $P_n$;
    \item $T$ has a vertex $v$ with $\deg(v)\ge 3$ and $T$ has no universal vertex
          (i.e., there is no vertex of degree $n-1$).
\end{enumerate}
\end{proposition}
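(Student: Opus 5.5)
The plan is a case split on the maximum degree $\Delta(T)$, which is all the statement needs.

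First dispose of the small cases. If $n\le 2$, then $T$ is $K_1$ or $K_2$. Each of these is both a star and a path, so condition 1 (and 2) holds. For the rest, assume $n\ge 3$, so $T$ has at least two edges and $\Delta(T)\ge 1$.

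\emph{Case $\Delta(T)\le 2$.} A connected acyclic graph in which every vertex has degree at most $2$ is a path, which gives condition 2. To argue this, take a longest path $P$ in $T$. Its endpoints have all their neighbours on $P$, otherwise $P$ could be extended. Interior vertices of $P$ already have degree $2$ within $P$, so by the degree bound they have no neighbours off $P$. Hence no vertex of $P$ has a neighbour outside $P$. Connectivity then gives $V(P)=V(T)$, and acyclicity rules out any extra edge between the endpoints. So $T=P_n$, and it is induced.

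\emph{Case $\Delta(T)\ge 3$.} Either some vertex has degree $n-1$ or none does. If one does, call it $v$. Every other vertex is adjacent to $v$, and since $T$ has exactly $n-1$ edges, these are all the edges. Hence $T=K_{1,n-1}$, which is condition 1. If no vertex has degree $n-1$, then $T$ has a vertex of degree at least $3$ and no universal vertex, which is condition 3 verbatim.

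I expect no real obstacle. The only step needing a sentence of justification is that a tree with maximum degree at most $2$ is a path, handled by the longest-path argument above. Note also that the three conditions need not be exclusive; for example, $K_{1,2}=P_3$ is both a star and a path. The statement asks only for ``at least one'', so nothing further is required.
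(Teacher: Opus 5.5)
Your proof is correct. The case split on $\Delta(T)$ (longest-path argument when $\Delta(T)\le 2$; the edge count $n-1$ forcing a star when a universal vertex exists; otherwise condition~3 verbatim) is the routine argument the paper has in mind when it says this fact follows directly from the definition of a tree, which it gives no further proof of.
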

%
%
%
%
%
%
\begin{observation}\label{obs:subtree-convex}
	Let $T$ be a tree and let $U$ be the vertex set of any subtree of $T$.
	Then $U$ is a $P_3$-convex set of $T$. In particular, any vertex set of a subpath of $T$ is a $P_3$-convex set.
\end{observation}
%
%
%
%
\begin{proposition}\label{pro:casi}
%
\textcolor{black}{Let $T$ be a tree on $n$ vertices, and assume that $T$ is not isomorphic 
to a star $K_{1,n-1}$ nor to a path $P_n$.
Let $r$ be any leaf of $T$ and regard $T$ as rooted at $r$.
Let $r'$ be the unique neighbor of $r$, and let $T'=T-r$, regarded as rooted at $r'$.}

Then:
\begin{enumerate}
    \item $\noc(T',\mathsf{W}) - \noc(T',\mathsf{G}) \;\ge\; n-1$.
    \item $\noc(T) \;<\; 2^{\,n-1} + n \;=\; \noc(K_{1,n-1})$.
\end{enumerate}
\end{proposition}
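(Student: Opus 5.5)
The plan is to derive both parts from one identity that expresses $\noc(T)$ through $T'$ and the quantity in part~1. Part~1 is then proved by exhibiting explicit convex sets, and part~2 by strong induction on $n$ using part~1.

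\emph{The identity.} By the recursion of Algorithm~\ref{algo: P_3 convex sets in trees} applied at the leaf root $r$, whose only child is $r'$, we get $\noc(T,\mathsf{B})=\noc(T',\mathsf{B})+\noc(T',\mathsf{G})$ and $\noc(T,\mathsf{W})=\noc(T',\mathsf{W})+\noc(T',\mathsf{B})$. Since $\noc(T')=\noc(T',\mathsf{B})+\noc(T',\mathsf{W})$, this gives
\[
\noc(T)=2\,\noc(T')-\bigl(\noc(T',\mathsf{W})-\noc(T',\mathsf{G})\bigr).
\]
Next I interpret the difference $D=\noc(T',\mathsf{W})-\noc(T',\mathsf{G})$. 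Unwinding the recursion, $\noc(T',\mathsf{W})$ counts the convex sets of $T'$ in which $r'$ is white and at most one child of $r'$ is black. Likewise $\noc(T',\mathsf{G})$ counts those in which $r'$ is white and no child of $r'$ is black. Hence $D$ is the number of $P_3$-convex sets $S$ of $T'$ with $r'\notin S$ and exactly one child of $r'$ in $S$.

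\emph{Part 1.} I will exhibit $n-1$ distinct such sets. For each child $c$ of $r'$ and each vertex $x$ in the branch $T'_c$ rooted at $c$, let $S_{c,x}$ be the vertex set of the $c$--$x$ path. By Remark~\ref{obs:subtree-convex} it is convex in $T'$. It avoids $r'$ and contains exactly one child of $r'$. Ranging over all $c$ and $x$, these sets are distinct and number $|V(T')|-1=n-2$. It remains to find one more set, using Proposition~\ref{prop: technical}: $T$ has a vertex $v$ of degree at least $3$ and no universal vertex.
\begin{itemize}
\item If $v\neq r'$, then $v$ lies in some branch $T'_c$ and has at least two children there. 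The set consisting of the $c$--$v$ path together with two children of $v$ is a subtree, hence convex, and it is not a path from $c$, so it is new.
\item If $v=r'$, then $r'$ has at least two children $c_1,c_2$. Since $r'$ is not universal, some child, say $c_1$, has a child $g$. Then $\{c_2,g\}$ is convex: $c_1$ has the single black neighbor $g$, $r'$ has the single black neighbor $c_2$, and no other white vertex is adjacent to both. This set is new because it is not connected.
\end{itemize}
Checking that this extra set is genuinely convex and distinct from the path sets in each subcase is the step I expect to need the most care.

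\emph{Part 2.} I prove by strong induction on $n$ that every tree on $n$ vertices has $\noc\le 2^{n-1}+n$, with strict inequality when it is neither a star nor a path. Let $T$ be neither a star nor a path; this forces $n\ge 4$. Then $T'$ has $n-1$ vertices, so $\noc(T')\le 2^{n-2}+n-1$. This holds by Lemma~\ref{lem:star} if $T'$ is a star, by Proposition~\ref{pro:path} if $T'$ is a path, and by the induction hypothesis otherwise. Combining this with the identity and part~1,
\[
\noc(T)\le 2\bigl(2^{n-2}+n-1\bigr)-(n-1)=2^{n-1}+n-1<2^{n-1}+n=\noc(K_{1,n-1}).
\]
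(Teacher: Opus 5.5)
Your proposal is correct and follows essentially the same route as the paper. It uses the same identity $\noc(T)=2\noc(T')-\bigl(\noc(T',\mathsf{W})-\noc(T',\mathsf{G})\bigr)$, the same $n-2$ downward-path sets plus one extra set for part~1, and the same induction for part~2. The only differences are minor: you split the extra-set construction on whether the degree-$\ge 3$ vertex equals $r'$ (the paper splits on $\deg_{T'}(r')$), and you state explicitly that the difference counts the convex sets with $r'$ white and exactly one child of $r'$ black, which the paper leaves informal.
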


\begin{proof}
%
	\noindent
	\textbf{Part (1).}
	Root $T'$ at $r'$.  
	For each vertex $v\in V(T')\setminus\{r'\}$, define $S(v)$ as the set of all vertices on the unique $v$--$r'$ path, excluding $r'$ but including $v$.

	Each $S(v)$ induces a subtree, hence\textcolor{black}{, by Remark~\ref{obs:subtree-convex},} is \textcolor{black}{a} $P_3$-convex \textcolor{black}{set}. Moreover:
	\begin{itemize}
		\item $S(v)$ contains exactly one neighbor of the root \(r'\), so it is counted by
	$\noc(T',\mathsf{W})$.
		\item $S(v)$ is \emph{not} counted by $\noc(T',\mathsf{G})$, because in the state $\mathsf{G}$ the root must have all children white, i.e., none of its neighbors may be included.
	\end{itemize}
	Since $T'$ has $n-1$ vertices and we exclude $r'$, the sets $S(v)$ account for exactly $n-2$ distinct $P_3$-convex sets counted \textcolor{black}{that are counted in $\noc(T',\mathsf{W})$ but not in $\noc(T',\mathsf{G})$.}

	\medskip
	\noindent
	We now construct one additional such convex set. We distinguish two cases according to the degree of $r'$ in $T'$.

	\smallskip
	\noindent
	\emph{Case (a): $\deg_{T'}(r')=1$.}
	Then $T'$ still has at least three leaves ($r'$ is a special leaf), because $T$ is not a star and some vertex has degree at least~$3$. Consider the set
	\[
	U=V(T')\setminus \{r'\}.
	\]
	This is $P_3$-convex (it induces a subtree) and contains the unique neighbor of $r'$ in $T'$, so it is counted by $\noc(T',\mathsf{W})$ but not by $\noc(T',\mathsf{G})$. Since every $S(v)$ contains at most one leaf, and $U$ contains all leaves of $T'$ except $r'$, we have $U\neq S(v)$ for all $v$. Thus $U$ yields one new convex set.

	\noindent
	\emph{Case (b): $\deg_{T'}(r')\ge 2$.} Since $T$ is not a star, $T'$ has a leaf $z$ at distance at least $2$ from
	$r'$. Let $x$ be a neighbor of $r'$ \textcolor{black}{that does not lie} on the $r'$--$z$ path. Then
	\[
	U=\{z,x\}
	\]
	is \textcolor{black}{a} $P_3$-convex \textcolor{black}{set} (it contains no $P_3$ with both endpoints). It contains exactly one neighbor of the root \textcolor{black}{$r'$}, hence is counted by $\noc(T',\mathsf{W})$ but not by $\noc(T',\mathsf{G})$. Since every \textcolor{black}{set} $S(v)$ induces \textcolor{black}{the unique $v$--$r'$ path, none of them can consist of two vertices in different branches; so, $U\neq S(v)$ for all $v\in V(T')\setminus \{r'\}$.}

	In both cases, we obtain one additional convex set distinct from all the $S(v)$. Therefore,
	\[
	\noc(T',\mathsf{W}) - \noc(T',\mathsf{G})
	\;\ge\;
	(n-2) + 1
	\;=\;
	n-1.
	\]

	\noindent
	\textbf{Part (2).}
	We proceed by induction on $n\ge 1$.

	\paragraph{Base case.} For $n\le 4$ there is no tree that is neither a star nor a path. Thus the statement holds vacuously.

	\paragraph{Inductive step.} Assume $n\ge 5$ and let $T$ be a tree on $n$ vertices which is neither a star nor a path.  Let $r$ be any leaf of $T$, let $r'$ be its unique neighbor, and let $T'=T-r$.  Then $T'$ is a tree of $n-1$ vertices.

	We classify the $P_3$-convex sets of $T$ according to the colors of the edge $rr'$.  The four possible color assignments $(\mathsf{W}/\mathsf{B})$ on $(r,r')$ yield:
	\[
	\begin{array}{c|c}
	(r,r') & \text{contribution} \\ \hline
	(\mathsf{W},\mathsf{W}) & \noc(T',\mathsf{W}) \\
	(\mathsf{W},\mathsf{B}) & \noc(T',\mathsf{B}) \\
	(\mathsf{B},\mathsf{W}) & \noc(T',\mathsf{G}) \\
	(\mathsf{B},\mathsf{B}) & \noc(T',\mathsf{B})
	\end{array}
	\]
	Hence
	\[
	\noc(T)
	=
	\noc(T',\mathsf{W})
	+\noc(T',\mathsf{G})
	+2\noc(T',\mathsf{B}).
	\]
	Since 
	\(
	\noc(T')=\noc(T',\mathsf{W})+\noc(T',\mathsf{B}),
	\)
	we obtain
	\[
	\noc(T)
	=
	2\noc(T') + \bigl( \noc(T',\mathsf{G}) - \noc(T',\mathsf{W}) \bigr).
	\tag{$\ast$}
		\]

	By induction, Lemma \ref{lem:star} and Proposition \ref{pro:path},
	\[
	\noc(T') \;\le\; 2^{n-2} + (n-1),
	\]
	and by the result proved previously,
	\[
	\noc(T',\mathsf{W}) - \noc(T',\mathsf{G}) \;\ge\; n-1,
	\qquad\text{that is,}\qquad
	\noc(T',\mathsf{G}) - \noc(T',\mathsf{W}) \;\le\; -(n-1).
	\]
	Inserting these two bounds into $(\ast)$ yields
	\[
	\noc(T)
	\;\le\;
	2(2^{n-2}+n-1) - (n-1)
	=
	2^{n-1}+n-1.
	\]
	Therefore
	\[
	\noc(T) < 2^{\,n-1} + n = \noc(K_{1,n-1}),
	\]
	completing the proof.
\end{proof}
\textbf{Finally, }the following theorem is an immediate consequence of Lemma~\ref{lem:star}, Corollary~\ref{cor:cycle}, and Propositions~\ref{pro:path},\textcolor{black}{~\ref{prop: technical}} and~\ref{pro:casi}.
\begin{theorem}\label{thm: optimum graph}
	Let $G$ be a connected graph on $n$ vertices. Then
	\[
	\noc(G) \;\le\; 2^{\,n-1} + n.
	\]
	Moreover, equality holds if and only if $G$ is isomorphic to the star $K_{1,n-1}$, the path $P_4$, or the path $P_5$.
\end{theorem}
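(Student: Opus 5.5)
The plan is to reduce to trees, then split trees according to Proposition~\ref{prop: technical}.

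\textbf{Step 1 (reduction to trees).} Let $G$ be connected on $n$ vertices. If $n\le 2$, then $G\in\{K_1,K_2\}$, and both are stars; so we may assume $n\ge 3$. If $G$ is not a tree, then $|E|\ge|V|$, and Corollary~\ref{cor:cycle} gives $\noc(G)<\noc(T)$ for any spanning tree $T$ of $G$. Hence it suffices to prove the bound for trees, with equality characterized there. Any non-tree then satisfies $\noc(G)<\noc(T)\le 2^{n-1}+n$, so it cannot attain equality.

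\textbf{Step 2 (trees).} Let $T$ be a tree on $n$ vertices and apply Proposition~\ref{prop: technical}.
\begin{itemize}
\item If $T\cong K_{1,n-1}$, then Lemma~\ref{lem:star} gives $\noc(T)=2^{n-1}+n$, so equality holds.
\item If $T\cong P_n$, then Proposition~\ref{pro:path} gives $\noc(P_n)\le 2^{n-1}+n$, with equality if and only if $n\le 5$.
\item Otherwise $T$ is neither a star nor a path, and Proposition~\ref{pro:casi}(2) gives the strict inequality $\noc(T)<2^{n-1}+n$.
\end{itemize}
This proves the bound for all trees, and hence, by Step 1, for all connected graphs.

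\textbf{Step 3 (equality cases).} Equality holds exactly for stars and for paths $P_n$ with $n\le 5$. The paths $P_1,P_2,P_3$ are themselves stars ($K_1$, $K_{1,1}$, $K_{1,2}$). So the only paths that are not stars and still attain equality are $P_4$ and $P_5$, which gives exactly the stated list.

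The argument is a straightforward assembly of earlier results. The only points that need care are the small cases $n\le 2$ in Step 1, where Corollary~\ref{cor:cycle} is not available but $G$ is a star anyway, and the bookkeeping in Step 3 that the short paths coincide with stars.
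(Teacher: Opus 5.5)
Your proposal is correct and follows the paper's route exactly: the paper gives no separate argument, and only states that the theorem is an immediate consequence of Lemma~\ref{lem:star}, Corollary~\ref{cor:cycle}, and Propositions~\ref{pro:path}, \ref{prop: technical} and~\ref{pro:casi}, which is precisely the assembly you carry out. Your explicit treatment of $n\le 2$ (where Corollary~\ref{cor:cycle} does not apply) and your remark that $P_1,P_2,P_3$ are themselves stars supply the small bookkeeping the paper leaves implicit.
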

\section{\textcolor{black}{Complexity results}}\label{sec: Complexity results}
\textcolor{black}{We begin this section by proving that counting $P_3$-convex sets in a graph is $\#\mathsf{P}$-complete. Our reduction relies on the  $\#\mathsf{P}$-completeness of counting independent sets (equivalently, vertex cover)~\cite{Valiant1979}. It is worth noting that this problem remains $\#\mathsf{P}$-complete even when restricted to bipartite graphs~\cite{ProvanB1983}, planar bipartite graphs~\cite{Vadhan2001}, and $k$-regular graphs for any fixed $k\ge 5$~\cite{Vadhan2001}.}

\textcolor{black}{Let $G$ be a graph. Let $\noi(G)$ denote the number of independent sets in $G$.}
\subsection{$\#\mathsf{P}$-completeness}
\begin{theorem}\label{thm: hardness result}
The problem of counting convex sets under $P_{3}$-convexity in split graphs is $\#\mathsf{P}$-complete.
\end{theorem}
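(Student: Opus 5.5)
We reduce counting independent sets (\#$\mathsf{P}$-complete by~\cite{Valiant1979}) to counting $P_3$-convex sets in split graphs, using a parsimonious-up-to-correction reduction. Membership in $\#\mathsf{P}$ is immediate: $P_3$-convexity of a given set $X$ is checked in polynomial time, since $X$ is convex iff every vertex outside $X$ has at most one neighbor in $X$.

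\textbf{Construction.} Let $H$ be a graph with $n$ vertices and $m\ge 2$ edges; instances with $m\le 1$ are solved directly. Build a split graph $G$ with clique $K=\{k_e : e\in E(H)\}$ and independent set $S=\{s_v : v\in V(H)\}$. Join $s_v$ to $k_e$ exactly when $v$ is an endpoint of $e$. Thus each $k_e$ with $e=uv$ has exactly two neighbors in $S$, namely $s_u$ and $s_v$. We then classify the convex sets $X$ of $G$ by $j=|X\cap K|$.

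\emph{Case $j=0$.} Here $X\subseteq S$. No vertex of $S$ outside $X$ has any neighbor in $X$, since $S$ is independent. A clique vertex $k_{uv}$ has at most one neighbor in $X$ iff $\{s_u,s_v\}\not\subseteq X$. Hence these $X$ correspond bijectively to independent sets of $H$, and there are exactly $\noi(H)$ of them.

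\emph{Case $j\ge 2$.} Every clique vertex outside $X$ already has two neighbors in $X\cap K$, so $K\subseteq X$. Then every $s_v$ with $d_H(v)\ge 2$ has two neighbors in $X$, so it lies in $X$. The vertices $s_v$ with $d_H(v)\le 1$ have at most one neighbor and are unconstrained. These sets therefore contribute exactly $2^{\,|\{v: d_H(v)\le 1\}|}$.

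\emph{Case $j=1$.} Write $X\cap K=\{k_e\}$. Each other clique vertex $k_f$ is adjacent to $k_e$, so it must have no neighbor in $X\cap S$. Hence $X\cap S\subseteq\{s_v : N_G(s_v)\subseteq\{k_e\}\}$, i.e.\ $X\cap S$ is contained in the set of endpoints of $e$ of degree $1$ in $H$. Conversely, every such choice is convex. The only remaining constraint is on the $s_w\notin X$; since $S$ is independent, $s_w$ has at most $k_e$ as a neighbor in $X$, so the constraint holds. This case contributes $\sum_{e\in E(H)} 2^{c(e)}$, where $c(e)\in\{0,1,2\}$ is the number of degree-$1$ endpoints of $e$.

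\textbf{Conclusion.} Combining the cases gives
\[
\noi(H)=\noc(G)-2^{\,|\{v:d_H(v)\le 1\}|}-\sum_{e\in E(H)}2^{c(e)} .
\]
The correction terms are computable in polynomial time, and $G$ has polynomial size. Therefore an algorithm counting $P_3$-convex sets of split graphs yields $\noi(H)$, which proves hardness.

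The main obstacle is the careful case $j=1$. One must check that forbidding extra neighbors of the other clique vertices does not interact with the constraints on the vertices of $S$ outside $X$. Separating $j\ge 2$ from $j=1$ requires $m\ge 2$, so that there actually are other clique vertices, which is why small instances are handled separately. The same bookkeeping would let one transfer hardness to restricted subclasses by starting from restricted source classes, such as bipartite or regular graphs.
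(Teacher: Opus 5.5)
Your reduction is correct apart from one counting slip (see the second paragraph), and it differs from the paper's in a way that matters. In the paper's notation (source graph $G$; split graph $H$ whose clique is $\{v_e : e\in E\}\cup\{u^*\}$, with $u^*$ universal), the paper tries to recover $\noi(G)$ from the convex sets $C$ with $C\cap K=\{u^*\}$. You drop $u^*$ and recover $\noi(H)$ from the convex sets disjoint from $K$. Their only constraints come from the vertices $k_e$, each of which sees exactly the two endpoints of $e$.

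The universal vertex actually destroys the paper's encoding:
\begin{itemize}
\item If $C\cap K=\{u^*\}$, every $v_e\notin C$ already has the neighbor $u^*$ in $C$, so $C$ contains no endpoint of any edge.
\item If $C\cap K=\{v_e\}$, then $u^*\notin C$ already has the neighbor $v_e$ in $C$, so $C\cap S=\emptyset$.
\item If $C\cap K=\emptyset$, then $|C\cap S|\le 1$.
\end{itemize}
Hence the paper's graph has exactly $2^{|V_0|+1}+|V|+|E|+1$ convex sets, which carries no information about $\noi(G)$. For $G=K_2$ one gets $K_4$ minus an edge, which has $6$ convex sets, not the $10$ given by the paper's formula. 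So your construction is not just an alternative; it is what makes the reduction work.

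It also buys more. Your split graph is $K_{1,4}$-free: each $s_v$ has a clique neighborhood, and $N_G(k_e)$ has independence number at most $3$. It therefore yields Corollary~\ref{cor:oneinducedK14} in a stronger form.

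The slip is in the case $j=1$. The set $\{s_v : N_G(s_v)\subseteq\{k_e\}\}$ contains not only the degree-$1$ endpoints of $e$ but also every $s_w$ with $w$ isolated in $H$, since then $N_G(s_w)=\emptyset$. Such vertices can indeed be added freely. This case therefore contributes $2^{|V_0|}\sum_{e\in E(H)}2^{c(e)}$, where $V_0$ is the set of isolated vertices of $H$. Your final identity is false when $V_0\neq\emptyset$: for $H=P_3+K_1$ one has $\noc(G)=26$, and your formula returns $14$ instead of $\noi(H)=10$. Either use the corrected term, or delete isolated vertices beforehand via $\noi(H)=2^{|V_0|}\noi(H-V_0)$. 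With that fix, hardness follows exactly as you argue.
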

\begin{proof}
	We first show that the problem belongs to $\#\mathsf{P}$, and then prove $\#\mathsf{P}$-hardness \textcolor{black}{via} a reduction from counting \textcolor{black}{independent sets}.
	\paragraph{Membership in $\#\mathsf{P}$.}
	\textcolor{black}{An input instance is a graph $H$ (in our reduction, $H$ will be a split graph), and value of the function} is the number of $P_3$-convex vertex sets of $H$. A function $f$ \textcolor{black}{belongs to} $\#\mathsf{P}$ if there exists a polynomial-time decidable relation $R$ such that 
	\[
	\textcolor{black}{f(a) = \bigl|\{b : R(a,b) \text{ holds}\}\bigr|.}
	\]
	We encode a vertex subset $C \subseteq V(H)$ by a bitstring \textcolor{black}{$b$} of length $|V(H)|$, and define \textcolor{black}{$R(H,b)$} to hold if and only if the subset $C$ encoded by \textcolor{black}{$b$} is a $P_3$-convex set of $H$.

	A set $C$ is $P_3$-convex if for every path $P_3 = (x,z,y)$ in $H$ with $x,y \in C$, we also have $z\in C$. This can be checked in $O(n^3)$ time by enumerating all ordered triples $(x,z,y)$, hence \textcolor{black}{$R(H,b)$} is decidable in polynomial time. Therefore, counting $P_3$-convex sets belongs to $\#\mathsf{P}$.
	\paragraph{$\#\mathsf{P}$-hardness via reduction.}
	We reduce from counting independent sets in a graph, which is known to be $\#\mathsf{P}$-complete~\cite{ProvanB1983, Vadhan2001}.
	\paragraph{The case $E=\emptyset$.}
	If $G$ has no edges, then every subset of $V$ is independent and $G$ contains no $P_3$. Thus, every subset of $V$ is also $P_3$-convex. Since an edgeless graph is a split graph (with $K=\emptyset$ and $S=V$), we may simply take $H = G$. In this case,
	\[
	\noc(H) = 2^{|V|} = \noi(G),
	\]
	so the reduction is exact. Hence, for the remainder of the proof, we may assume that $G$ has at least one edge.	

	\subsection*{Reduction for graphs with at least one edge}
	Let $G=(V,E)$ be an input graph with $|E|\ge 1$. We construct a split graph $H$ as follows. The vertex set of $H$ is partitioned into a clique $K$ and an independent set $S$. The clique $K$ contains $|E|+1$ vertices: for each edge $e\in E$ we add a vertex $v_e$, and we add a special universal vertex $u^{*}$. The set $S$ contains $|V|$ vertices, one representing each vertex of $G$. The adjacencies are defined as follows:
	\begin{itemize}
	    \item $K$ is a clique and $u^{*}$ is universal in $H$.
	    \item For every edge \textcolor{black}{$e=xy\in E(G)$}, the vertex $v_e$ is adjacent exactly to the two vertices of $S$ corresponding to $x$ and $y$.
	\end{itemize}
%
	\textcolor{black}{We now count the $P_3$-convex sets of $H$ by partitioning them into three disjoint categories.}
	\paragraph{Case (i): \textcolor{black}{$P_3$-convex sets that contain all vertices of $K$.}}
	Let $S_1=\{s\in S : \deg_H(s)=1\}$. A vertex of $S$ has degree $1$ in $H$ if and only if the corresponding vertex of $G$ has degree $0$. Thus $S_1$ corresponds precisely to 
	\[
	V_0=\{v\in V : \deg_G(v)=0\}.
	\]
	Vertices in $S_1$ may be \textcolor{black}{included or excluded freely} in such a $P_3$-convex set, \textcolor{black}{whereas} all vertices in $S\setminus S_1$ must be included. Hence, there are
	\[
	2^{|S_1|}=2^{|V_0|}.
	\]
	\textcolor{black}{such $P_3$-convex sets.}
	\paragraph{Case (ii): \textcolor{black}{$P_3$-convex sets that contain no vertex of $K$}.}
	\textcolor{black}{Since $u^{*}$ is not included in such a $P_3$-convex set}, \textcolor{black}{it contains} at most one vertex of $S$; otherwise, a $P_3$ would force the inclusion of a vertex of $K$. Hence, \textcolor{black}{there are}
	\[
	|V|+1.
	\]
	\textcolor{black}{such $P_3$-convex sets.}

	\paragraph{Case (iii): \textcolor{black}{$P_3$-convex sets that contain exactly one vertex of $K$.}}
\textcolor{black}{Let $C$ be such a $P_3$ convex set, and let $C\cap K = \{v\}$. In this case we will split the counting into two subcategories.}

	\textcolor{black}{Suppose first that $v\neq u^*$ and $v_e = v$.} If $C$ contained two vertices of $S$, then those two vertices \textcolor{black}{form a $P_3$ with $u^*$, which would force the inclusion of $u^*$ to $C$}, contradicting \textcolor{black}{our} assumption.  
	Thus $|C\cap S|\le 1$, and \textcolor{black}{there are}
	\[
	|E|\cdot|V| + |E|
	\]
	\textcolor{black}{\textcolor{black}{such} $P_3$-convex sets.}

	\textcolor{black}{Suppose now that $v = u^*$}. Then a subset $X \subseteq S$ forms a $P_3$-convex set together with $u^{*}$ if and only if $X$ is an independent set of $G$. Thus, \textcolor{black}{there are $\noi(G)$ such $P_3$-convex sets.}

	Summing \textcolor{black}{up}, we obtain
	\[
	\noc(H) = \noi(G)\;+\;2^{|V_0|}+|V|+1+|E|\cdot|V|+|E|.
	\]

	Therefore, $\noi(G)$ can be computed in polynomial time from \textcolor{black}{$\noc(H)$}, establishing $\#\mathsf{P}$-hardness. Since we also established membership in $\#\mathsf{P}$, the problem is $\#\mathsf{P}$-complete.
\end{proof}
\textcolor{black}{Note that, in the reduction above, the constructed split graph $H$ does not contain two vertex-disjoint induced copies of $K_{1,4}$, since any induced $K_{1,4}$ must have the special universal vertex $u^{*}$ as its center. Hence, we obtain the following strengthened $\#\mathsf{P}$-completeness result.}
\begin{corollary}\label{cor:oneinducedK14}
Even when restricted to split graphs that do not contain two vertex-disjoint induced copies of $K_{1,4}$, 
the problem of counting $P_3$-convex sets is $\#\mathsf{P}$-complete.
\end{corollary}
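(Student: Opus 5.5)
The plan is to reuse the reduction from the proof of Theorem~\ref{thm: hardness result} unchanged. Given an input graph $G=(V,E)$ with at least one edge, we build the same split graph $H$: the clique $K=\{v_e : e\in E\}\cup\{u^*\}$ and the independent set $S$ with one vertex per vertex of $G$. We already have $\noc(H)=\noi(G)+2^{|V_0|}+|V|+1+|E|\cdot|V|+|E|$. Membership in $\#\mathsf{P}$ is inherited from the general case. The edgeless case is trivial, since then $H=G$ contains no $K_{1,4}$ at all. So the only new content is the structural claim that $H$ has no two vertex-disjoint induced copies of $K_{1,4}$.

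\textbf{Step 1: locate the center.} Take an induced $K_{1,4}$ in $H$ with center $c$ and leaves $\ell_1,\dots,\ell_4$. The leaves are pairwise nonadjacent, so at most one of them lies in the clique $K$ and at least three lie in $S$. If $c\in S$, then the neighbors of $c$ all lie in $K$, and $K$ is a clique, so $c$ cannot have two nonadjacent neighbors. Hence $c\in K$. If $c=v_e$ for some edge $e$, then $N_H(v_e)\cap S$ has exactly two vertices, the endpoints of $e$, so $c$ has at most two leaves in $S$. This contradicts the count of at least three leaves in $S$. Therefore $c=u^*$.

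\textbf{Step 2: conclude.} By Step~1, every induced $K_{1,4}$ in $H$ contains $u^*$. Two vertex-disjoint induced copies would both need $u^*$, which is impossible. The class of split graphs without two vertex-disjoint induced $K_{1,4}$'s therefore contains every $H$ produced by the reduction. Since the reduction recovers $\noi(G)$ from $\noc(H)$ in polynomial time, the restricted problem is $\#\mathsf{P}$-hard and hence $\#\mathsf{P}$-complete.

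The only step needing care is the case $c=v_e$ in Step~1, where one has to rule out the possibility that the clique provides extra leaves. At most one leaf can come from $K$, and $v_e$ has only two neighbors in $S$. So $v_e$ has at most three pairwise nonadjacent neighbors, which is fewer than the four leaves a $K_{1,4}$ needs.
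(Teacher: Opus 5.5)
Your Steps~1 and~2 are correct, and they are the paper's own justification (every induced $K_{1,4}$ of $H$ is centered at $u^{*}$), written out in full. The gap is in the claim that ``the reduction recovers $\noi(G)$ from $\noc(H)$''. You import the formula $\noc(H)=\noi(G)+2^{|V_0|}+|V|+1+|E|\cdot|V|+|E|$ from the proof of Theorem~\ref{thm: hardness result}, and that formula is false. Because $u^{*}$ is adjacent to every $v_e$, take $x\in S$ that is an endpoint of some edge $e$. If $x\in X$, then $u^{*}$--$v_e$--$x$ is a $P_3$ whose middle vertex lies outside $\{u^{*}\}\cup X$. So $\{u^{*}\}\cup X$ is convex only for $X\subseteq V_0$, not for every independent set $X$ of $G$. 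Likewise $\{v_e,s\}$ with $s\in S$ is never convex, since $u^{*}$ is adjacent to both. The correct count is $\noc(H)=2^{|V_0|+1}+|V|+|E|+1$. For example, $G=K_2$ gives $H=K_4$ minus an edge, which has $6$ convex sets, while the claimed formula gives $10$. So $\noc(H)$ carries no information about $\noi(G)$, and neither your argument nor the paper's actually establishes the corollary.

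The repair is easy, and your Step~1 adapts to it: delete $u^{*}$, and assume $|E|\ge 2$ (the cases $|E|\le 1$ are trivial for $\noi$). Then $K=\{v_e: e\in E\}$, and the convex sets $C$ split as follows.
\begin{itemize}
\item $C\cap K=\emptyset$: these are exactly the independent sets of $G$, since each $v_e\notin C$ may have at most one endpoint of $e$ in $C$.
\item $C\cap K=\{v_e\}$: these are $\{v_e\}\cup X$ with $X$ avoiding every endpoint of every edge other than $e$, because each such $v_{e'}$ already sees $v_e\in C$. There are $2^{|V_0|+d_e}$ of them, where $d_e$ is the number of endpoints of $e$ of degree one in $G$.
\item $K\subseteq C$: there are $2^{|\{v:\deg_G(v)\le 1\}|}$ of them.
\item $2\le|C\cap K|<|K|$: this is impossible.
\end{itemize}
Hence $\noi(G)$ is recovered in polynomial time. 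In this graph your Step~1 argument shows that the center of an induced $K_{1,4}$ would have to be some $v_e$. But $v_e$ has at most three pairwise nonadjacent neighbors: two in $S$ and at most one in $K$. So the graph has no induced $K_{1,4}$ at all, which yields the corollary, in fact in a stronger form.
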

%
%
%
%
\subsection{Threshold Graphs: a linear-time solvable case}
Throughout this subsection we consider an input threshold graph $G=(V,E)$ under the assumption that $G$ is not isomorphic to the star $K_{1,|V|-1}$.  Stars were analyzed in a dedicated subsection above, and therefore we focus
here solely on non-star threshold graphs. \textcolor{black}{For a thorough treatment of the graph class known as threshold graphs, we refer the reader to~\cite{Golumbikbook}}. 

Threshold graphs form a well-structured subclass of split graphs and admit several equivalent characterizations.  A graph $G$ is a threshold graph if and only if it contains no induced $P_4$, $C_4$, or $2K_2$; equivalently, $G$ can be constructed from a single vertex by repeatedly adding either an isolated vertex or a universal vertex.  Every threshold graph $G$ admits a canonical split partition $(K,S)$ \textcolor{black}{-- a split partition such that $|K| = \omega(G)$ --} where $K$ is a clique, $S$ is an independent set, \textcolor{black}{and $\omega(G)$ denotes the cardinality of a maximum clique}.  
Moreover, the closed neighborhoods of vertices in $S$ are totally ordered
by inclusion, and an analogous nesting relation holds for the neighborhoods
of vertices in $K$ when restricted to~$S$.

If $G$ contains isolated vertices, then each \textcolor{black}{addition} of an isolated vertex doubles the number of $P_3$-convex sets: if $v$ is isolated and $C$ is a convex set of $G-v$, then $C$ and $C\cup\{v\}$ are precisely the convex sets
of $G$ \textcolor{black}{containing the convex set $C$ of $G-v$}. \textcolor{black}{Thus, if 
\[
S_0 = \{\,s\in S : \deg_G(s)=0\,\}.
\]
then 
\[
\noc(G) = 2^{|S_0|}\cdot\noc(G-S_0).
\]
}

Hence, without loss of generality, we assume that the input threshold graph
$G=(V,E)$ has minimum degree $\delta(G)\ge 1$.  
In threshold graphs this implies that $G$ contains exactly $\delta(G)$
universal vertices, all lying in the clique part $K$.  
If $G$ is a complete graph, we take the canonical split partition 
$(K=V,\; S=\emptyset)$.

Let
\[
S_1 = \{\,s\in S : \deg(s)=1\,\}.
\]
Clearly, $S_1=\emptyset$ if and only if $\delta(G)\ge 2$.

\medskip
For the canonical split partition $(K,S)$, every $P_3$-convex set $C$ satisfies
\[
|C\cap K|\in\{0,1,|K|\}.
\]
We analyze these cases separately.

\paragraph{Case (i): $C\cap K=\emptyset$.}
If $C$ contains no vertices of $K$, then $C$ contains at most one vertex of $S$. Indeed, if $s_i,s_j\in C$ with $i\neq j$, then any universal vertex $u\in K$ lies on the path $s_i - u - s_j$, forcing $u\in C$ and contradicting $C\cap K=\emptyset$. Thus this case contributes exactly
\[
|S|+1.
\]
\paragraph{Case (ii): $|C\cap K|=1$.}
Let $C\cap K=\{u\}$.

\textcolor{black}{Let $v$ be a universal vertex of $G$.}

\textcolor{black}{If $v$ is its only universal vertex} and $u=v$, then $C$ may contain more
than one vertex of $S$ but \textcolor{black}{none of them in} $S\setminus S_1$.  
The number of convex sets in this subcase is therefore
%
\textcolor{black}{\[
2^{|S_1|}.
\]}
%
\textcolor{black}{If $u\neq v$} or $G$ has more than one universal vertex, then $C$  \textcolor{black}{contains}  \textcolor{black}{no vertex of $S$}. For such $u$, the number of convex sets is \textcolor{black}{exactly one.}
%

\paragraph{Case (iii): $K\subseteq C$.}
If $C$ contains all vertices of $K$, then every $s\in S$ of degree at least~2
must lie in $C$, since it is the middle vertex of a $P_3$ with both endpoints
in $K$.  
Vertices $s\in S_1$ may be included or excluded independently, as no
$P_3$ forces their presence.  
Thus this case contributes
\[
2^{|S_1|}.
\]

\medskip
Combining all three cases yields the following result.

\begin{theorem}\label{thm: thershold graphs}
Let $G=(V,E)$ be a threshold graph with minimum degree $\delta(G)\ge 1$, not 
isomorphic to a star.  
The
number of $P_3$-convex sets of $G$ is
\textcolor{black}{
\[
\noc(G)=(|S|+1)
\;+\;
|K|
\;+\;
N_U
\;+\;
2^{|S_1|},
\]
}
where
\textcolor{black}{
\[
N_U = 
\begin{cases}
0 & \text{if } \delta(G)\ge 2,\\[2mm]
2^{|S_1|} - 1 & \text{if } \delta(G)=1.
\end{cases}
\]}
The total number can be computed in $O(|V|+|E|)$ time.
\end{theorem}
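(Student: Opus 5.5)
The plan is to make the case analysis preceding the statement rigorous and then add up the contributions. Throughout, fix the canonical split partition $(K,S)$. The first step is to justify $|C\cap K|\in\{0,1,|K|\}$ for every $P_3$-convex set $C$. If $a,b\in C\cap K$ are distinct, then every other $c\in K$ is the middle vertex of the path $a$--$c$--$b$, so $c\in C$. I will also note that $\delta(G)\ge 1$ guarantees at least one universal vertex in $K$. Since $G$ is not a star, we have $|K|\ge 2$, so the three cases $|C\cap K|=0$, $|C\cap K|=1$ and $K\subseteq C$ are pairwise disjoint. (Here I use that a threshold graph with $|K|=1$ and $\delta\ge 1$ is a star.)

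\textbf{Cases (i) and (iii).} Case (i) is handled exactly as in the text: two vertices of $S$ would force a universal vertex into $C$. This gives $|S|+1$ sets. For Case (iii), every $s\in S$ with $\deg(s)\ge 2$ is the middle vertex of a $P_3$ with both ends in $K$, so it must lie in $C$. Conversely, adding any subset of $S_1$ to $K\cup(S\setminus S_1)$ keeps the set convex, because each vertex of $S_1$ has only one neighbor. This gives $2^{|S_1|}$ sets.

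\textbf{Case (ii), the main obstacle.} Suppose $C\cap K=\{u\}$ and $s\in C\cap S$. I want to show that this forces $u$ to be the unique universal vertex and $s\in S_1$. I would argue as follows.
\begin{enumerate}
\item If $s$ is not adjacent to $u$, then any universal vertex $w\ne u$ gives the path $s$--$w$--$u$, which forces $w\in C$, a contradiction. If instead $u$ itself is universal, then $s$ is adjacent to $u$, so this subcase cannot occur.
\item If $s$ is adjacent to $u$ and $\deg(s)\ge 2$, then $s$ has another neighbor $k\in K$, and the path $s$--$k$--$u$ forces $k\in C$, a contradiction.
\item Hence $s\in S_1$ and $N(s)=\{u\}$. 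Since $\delta(G)\ge 1$ and threshold neighborhoods are nested, the unique neighbor of a degree-one vertex is universal. So $u$ is universal, and $\delta=1$ means it is the only universal vertex.
\end{enumerate}
Conversely, for this universal $u$, the set $\{u\}\cup X$ with $X\subseteq S_1$ is convex. Indeed, every vertex outside the set other than vertices of $S_1$ has only $u$ as a neighbor in it, and vertices of $S_1$ outside it are adjacent only to $u$.

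Therefore Case (ii) contributes one set $\{u\}$ for each $u\in K$, which totals $|K|$. When $\delta(G)=1$ (so that $S_1\neq\emptyset$ is consistent), it contributes an additional $2^{|S_1|}-1$ sets coming from nonempty $X\subseteq S_1$. When $\delta(G)\ge 2$, we have $S_1=\emptyset$ and there is nothing extra; this gives $N_U$. Summing the three cases yields the stated formula.

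\textbf{Running time.} For the complexity claim, I would compute all degrees in $O(|V|+|E|)$ time. The canonical partition can then be obtained from the degree sequence: take $K$ as a maximum clique, read off greedily from vertices sorted by degree via bucket sort. Then $|S|$, $|K|$, $|S_1|$ and $\delta(G)$ are available in linear time. The powers of two can be computed by shifting.
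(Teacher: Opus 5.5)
Your proposal is correct and follows essentially the same route as the paper. Both use the trichotomy $|C\cap K|\in\{0,1,|K|\}$, with the three cases contributing $|S|+1$, $|K|+N_U$ and $2^{|S_1|}$, and both get the running time from a linear-time construction of the canonical split partition; your Case (ii) argument, deducing from a vertex $s\in C\cap S$ that $s\in S_1$ and that its unique neighbor is the only universal vertex, is more careful than the paper's brief sketch of that case.
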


\begin{proof}
The correctness \textcolor{black}{of the formula follows from the case analysis above.} 
%

Threshold graphs admit linear-time recognition as well as linear-time construction of their canonical split partition.  
All quantities $K$, $S$, $S_1$, and $N_U$ can be computed in $O(|V|+|E|)$ time, and the summation over $w\in K$ also requires linear time. \textcolor{black}{Therefore,} the entire computation runs in $O(|V|+|E|)$ time.
\end{proof}
\section{\textcolor{black}{Exponential-Time Algorithms for Computing $\noc(G)$}}
\label{subsec:exact-algorithms}
In this \textcolor{black}{section} we develop exact exponential-time algorithms for computing $\noc(G)$ for an arbitrary input graph $G=(V,E)$. We begin with a naive $O^*(2^{|V|})$ algorithm and then present more sophisticated algorithms whose time bounds improve the exponential base by combining structural decomposition, forced-color propagation, and fast 
independent-set counting.

Throughout this subsection, we use the two-color interpretation of $P_3$-convexity \textcolor{black}{introduced in Subsection~\ref{subec:trees}.}: 
%
%
A direct enumeration algorithm proceeds as follows. Each vertex independently chooses one of the two colors $\mathsf{B}$ or $\mathsf{W}$, resulting in $2^{|V|}$ possible colorings. For each coloring, we check in $O(|V|+|E|)$ time whether every white vertex has at most one black neighbor. Valid colorings correspond exactly to $P_3$-convex subsets of $G$.
%
\textcolor{black}{
This procedure runs in
\[
O\!\left(2^{|V|}(|V|+|E|)\right)=O^*(2^{|V|}),
\]
time, and will serve as a baseline for comparison.}
\subsection{General strategy for improved exact algorithms}
\label{subsec:general-strategy}
Let $I$ be an independent set of $G$. Assume we enumerate all valid partial colorings $\pi$ of $G-I$.
For each $\pi$ we apply the two propagation rules:
\begin{enumerate}
\item[(R1)] If a white vertex has a black neighbor, then all its 
uncolored neighbors become white.
\item[(R2)] If an uncolored vertex has two black neighbors, it becomes black.
\end{enumerate}

Let $I_\pi$ be the subset of vertices of $I$ that become forced-colored by 
(R1)--(R2).  
If the propagated coloring is invalid, discard~$\pi$.
Otherwise:
%
%
We construct \textcolor{black}{the auxiliary graph} $H_\pi$ as follows:
\begin{itemize}
\item $V(H_\pi)=I\setminus I_\pi$,
\item $xy\in E(H_\pi)$ iff the corresponding vertices of $G$ share a 
white neighbor under $\pi$.
\end{itemize}

Two vertices of $I\setminus I_\pi$ cannot both be black in a valid 
completion exactly when they are adjacent in $H_\pi$.  
Hence every valid completion corresponds to an independent set of $H_\pi$.

Using the $O^*(1.2356^{n})$ algorithm of
Gaspers and Lee~\cite{GaspersLee2023} for a graph with $n$ vertices, we can count the independent sets 
of $H_\pi$ efficiently.

Thus:

\[
\noc(G)=\sum_{\pi}\noi(H_\pi),
\]
where the sum ranges over all valid partial colorings $\pi$ of $G-I$.

\subsection{Running time as a function of a large independent set}\label{subse: general running time for stable sets}
The next exact algorithm follows a general scheme based on a large independent set $I$ of the input graph $G=(V,E)$. Once such a set $I$ is identified, the algorithm proceeds as follows:
\begin{itemize}
    \item enumerate \textcolor{black}{all the valid} partial colorings of $G-I$ into colors $\{\mathsf{B},\mathsf{W}\}$;
    \item for each partial coloring $\pi$, apply the propagation rules (R1)--(R2) to force additional assignments on vertices of $I$;
    \item let $I_\pi\subseteq I$ be the set of vertices of $I$ that became colored by propagation; if the resulting extension is invalid, discard $\pi$;
    \item if $I\setminus I_\pi=\emptyset$, then $\pi$ contributes exactly one $P_3$-convex set;
    \item otherwise, construct the auxiliary graph $H_\pi$ on vertex set $I\setminus I_\pi$, where two vertices are adjacent if and only if their corresponding vertices in $G$ share a white neighbor under~$\pi$; the valid completions of $\pi$ then correspond bijectively to the independent sets of $H_\pi$, which can be counted in time $O^*(1.2356^{\,|I\setminus I_\pi|})$ using the algorithm of Gaspers and Lee~\cite{GaspersLee2023}.
\end{itemize}
\paragraph{A generic upper bound.}
Let $n=|V|$ and $k=|I|$. In the worst case, the number of possible colorings of $G-I$ is $2^{\,|V\setminus I|}=2^{n-k}$; only some of them are valid, but this bound is sufficient to obtain an upper estimate of the running time. For each valid partial coloring $\pi$ on $G-I$, the work needed to process the corresponding $(I\setminus I_\pi)$ is at most $O^*(1.2356^{\,|I\setminus I_\pi|})\le O^*(1.2356^{\,k})$. Therefore, the total running time satisfies
\[
T(G)
= O^*\!\left(2^{\,n-k} \cdot 1.2356^{\,k}\right).
\]
Equivalently, if we set $\alpha = k/n$ (the fraction of vertices in~$I$), we obtain
\[
T(G)
= O^*\!\left(
\bigl(2^{1-\alpha}\cdot 1.2356^{\,\alpha}\bigr)^n
\right).
\]

Thus, the larger the independent set $I$ we can guarantee, the smaller the
exponential base.

\paragraph{Graph classes with guaranteed large independent sets.}
This observation suggests that the algorithm is particularly effective on
graph classes where:

\begin{enumerate}
    \item there exists a constant $\alpha>0$ such that every $n$-vertex graph
          in the class admits an independent set of size at least $\alpha n$, and
    \item such an independent set can be found in polynomial time.
\end{enumerate}

In this situation, we have a uniform bound
\[
T(G)
= O^*\!\left(
\bigl(2^{1-\alpha}\cdot 1.2356^{\,\alpha}\bigr)^n
\right),
\]
for all graphs $G$ in the class.

We briefly illustrate the effect of different values of~$\alpha$:

\begin{itemize}
    \item If $\alpha = 1/2$, then
    \[
    T(G)
    = O^*\!\left(
      \bigl(2^{1/2}\cdot 1.2356^{1/2}\bigr)^n
    \right)
    \approx O^*(1.57^n).
    \]
    \item If $\alpha = 1/3$, then
    \[
    T(G)
    \approx O^*(1.70^n).
    \]
    \item If $\alpha = 1/4$, then
    \[
    T(G)
    \approx O^*(1.77^n).
    \]
\end{itemize}

In many well-studied classes of graphs such an $\alpha$ and a corresponding
polynomial-time construction of a large independent set are known; for example:

\begin{itemize}
    \item Any graph on $n$ vertices and maximum degree $\Delta$, admits an independent set of size at least $n/(\Delta+1)$, which can be obtained by a straightforward greedy algorithm.
		\item Every bipartite graph on $n$ vertices contains an independent set of size at least $n/2$, since one of its two color classes has size at least $n/2$.
    \item For planar graphs, \textcolor{black}{the four colors theorem} guarantees the existence of an independent set of size at least $n/4$~\cite{RobertsonSST1997}, and such a set can be computed efficiently via a known algorithm \textcolor{black}{for four coloring planar graphs~\cite{RobertsonSST1996}}.
\end{itemize}
For each such class, plugging the corresponding lower bound on $|I|$ into the
generic expression
\[
T(G)=O^*\!\left(2^{\,n-|I|}\cdot 1.2356^{\,|I|}\right)
\]
yields an explicit exponential base strictly smaller than~2.
This shows that the general independent-set–based strategy underlying
Algorithm~\ref{alg:second} can be sharpened significantly on classes of graphs
where large independent sets are guaranteed and can be found in polynomial time.
\subsection{Three faster exponential-time algorithms}\label{subsec:second-exact}
We now present three improved algorithms based on a careful decomposition of $G$ 
into three phases until only an independent set $I$ remains.  
For every partial coloring of $G-I$ we invoke the general strategy from 
Subsection~\ref{subsec:general-strategy}. There are three different alternatives for Phase~2, yielding three different time bounds.

\paragraph{Phase 1: Major blocks.}

We search for a vertex $v$ whose $G[N(v)]$ contains a nontrivial 
connected component.  
Among all such vertices we pick one maximizing the size of such a component.
Let $M$ be the induced subgraph consisting of $v$ and that component.  
We remove $M$ and call it a \emph{major block}.  
Let $p$ be the total number of removed vertices.

Every major block satisfies $|M|\ge 3$.  
Every valid coloring of $M$ is exactly one of:

\[
\text{all white},\qquad
\text{exactly one black vertex},\qquad
\text{all black},
\]

\noindent hence exactly $|M|+2$ valid colorings.  
Since $|M|\ge 3$ we have $|M|+2 \le 5^{|M|/3}$, therefore:

\[
\#\text{valid colorings from Phase 1 is at most } 5^{p/3}.
\]

The remainder after Phase~1 is triangle-free (otherwise a triangle would 
produce a bigger block).

\paragraph{Phase 2: three flavors of star extraction.}

In the triangle-free remainder, we consider three variants of Phase~2.

\smallskip
\emph{Variant A (claws $K_{1,3}$).}
Any vertex of maximum degree at least $3$ induces a claw ($K_{1,3}$).  
Whenever such a vertex exists, we remove the entire claw.  
Let $q$ be the total number of removed vertices.  
Each claw has exactly $12$ valid $P_3$-convex colorings, so:
\[
\#\text{valid colorings from Phase 2A is at most } 12^{q/4}.
\]
After this phase the remainder is triangle-free and has maximum degree $\Delta\le 2$, hence it is a disjoint union of paths and cycles.

\smallskip
\emph{Variant B (stars $K_{1,4}$).}
Instead of claws, we iteratively remove induced stars $K_{1,4}$, that is, 
whenever a vertex of degree at least $4$ exists, we pick one such vertex $v$ 
and remove $v$ together with four of its neighbors. 
Again let $q$ be the number of removed vertices.  
Each $K_{1,4}$ has exactly $21$ valid $P_3$-convex colorings, so
\[
\#\text{valid colorings from Phase 2B is at most } 21^{q/5}.
\]
After Phase~2B the remainder is triangle-free with maximum degree $\Delta\le 3$.

\smallskip
\emph{Variant C (stars $K_{1,5}$).}
Analogously, we may remove induced stars $K_{1,5}$, each consisting of a center
and five leaves.  
Let $q$ be the total number of removed vertices.  
Each $K_{1,5}$ has exactly $38$ valid $P_3$-convex colorings, hence
\[
\#\text{valid colorings from Phase 2C is at most } 38^{q/6}.
\]
After Phase~2C the remainder is triangle-free with maximum degree $\Delta\le 4$.

\paragraph{Phase 3: Independent set extraction.}

We find an independent set in the remaining triangle-free graph.  We distinguish two cases depending on the maximum degree $\Delta$ of the remainder.

\begin{itemize}
\item \textcolor{black}{If $\Delta\leq 2$ (Variant A), then the remainder is a disjoint union of paths and cycles.  We extract a maximum independent set from each component: For a path or an even cycle, at least half the vertices lie in $I$;
for an odd cycle $C_{2k+1}$ with $k\ge 2$ (no triangles remain), exactly $k$  vertices lie in $I$.}
%
\item If $3\leq \Delta\leq 4$ (Variants B and C), we apply the standard greedy procedure: iteratively add a minimum-degree vertex $v$ to $I$ and delete $v$ and its neighbors.  This guarantees
\[
|I| \;\ge\; \frac{|V|-p-q}{\Delta+1}.
\]
\end{itemize}

Let $r=|I|$, and let 
\[
t = |V| - p - q - r
\]
be the number of vertices that are removed in Phase~3 and do not belong to $I$. Define $s=t+r$.
Each of these $t$ vertices contributes at most $2$ possible colors, giving:

\[
\#\text{phase-3 colorings is at most } 2^{t}.
\]

\paragraph{Algorithms.}
See Algorithm \ref{alg:second}

\begin{algorithm}[h]
\caption{Improved exact algorithms for computing $\noc(G)$}
\label{alg:second}
\begin{algorithmic}[1]
\Function{Second-Exact-NOC}{$G$}
    \State Perform Phase 1 (major blocks)
    \State Choose one of the three variants of Phase 2 (claws, $K_{1,4}$'s, or $K_{1,5}$'s) and apply it
    \State Perform Phase 3 to obtain an independent set $I$
    \State $count \gets 0$
    \For{each valid coloring of Phase 1 blocks ($\le 5^{p/3}$)}
        \For{each valid coloring of Phase 2 structures
              ($\le 12^{q/4}$ or $\le 21^{q/5}$ or $\le 38^{q/6}$)}
            \For{each coloring of Phase 3 removed vertices ($\le 2^t$)}
                \State form partial coloring $\pi$ on $G-I$
                \State apply rules (R1)--(R2) to propagate forced colors, obtain $\pi'$ and $I_\pi$
                \If{$\pi'$ invalid} \textbf{continue} \EndIf
                \State $J = I \setminus I_\pi$
                \If{$J=\emptyset$}
                    \State $count\gets count+1$
                \Else
                    \State build $H_\pi$ on $J$
                    \State $count \gets count + \noi(H_\pi)$
                \EndIf
            \EndFor
        \EndFor
    \EndFor
    \State \Return $count$
\EndFunction
\end{algorithmic}
\end{algorithm}

\paragraph{Correctness.}

\begin{theorem}
Each of the algorithms obtained by choosing one of the three variants of Phase~2 in Algorithm~\ref{alg:second} computes $\noc(G)$ exactly.
\end{theorem}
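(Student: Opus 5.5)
Every valid coloring of $G$ restricts to a partial coloring $\pi$ of $G-I$. The plan is to show that the map sending a $P_3$-convex set $C$ to the coloring of $G-I$ it induces, together with the coloring of $I$, splits $\noc(G)$ as $\sum_\pi \noi(H_\pi)$. For each $\pi$ we count the completions exactly once. The structure of the phases (major blocks, stars, remaining non-$I$ vertices) only serves to enumerate a superset of the restrictions. So the first step is to check that no valid restriction is skipped. A $P_3$-convex set of $G$ restricted to a major block $M$, or to an extracted star, must be one of the listed valid colorings of that induced subgraph. This holds because $P_3$-convexity is inherited by the trace on an induced subgraph: a $P_3$ inside $G[M]$ is a $P_3$ of $G$. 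The vertices removed in Phase~3 outside $I$ range over both colors. Hence the triple loop enumerates every restriction of every convex set of $G$, possibly together with some invalid ones, and each one appears exactly once because the phases partition $V\setminus I$.

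The second step is soundness and completeness of propagation. Fix $\pi$ and a convex set $C$ whose trace on $G-I$ is $\pi$. The rules preserve agreement with $C$. For (R1), a white vertex with a black neighbor forces every other neighbor to be white in $C$. For (R2), an uncolored vertex with two black neighbors must be black. Therefore discarding $\pi$ when propagation yields a violation loses no convex set, and $C$ agrees with the forced colors on $I_\pi$. Conversely, we will show that a coloring extending $\pi'$ on $J=I\setminus I_\pi$ is valid if and only if the set of black vertices of $J$ is independent in $H_\pi$. Since $I$ is independent, each vertex $x\in J$ has all its neighbors in $G-I$, and these are already colored.

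Validity requires two conditions. The first is that every white vertex of $J$ has at most one black neighbor. This is settled entirely by $\pi'$, and (R2) together with the discard check guarantees it: an uncolored $x$ with at least two black neighbors would have been forced black. The second is that every white vertex $w$ of $G-I$ has at most one black neighbor overall. If $w$ already has a black neighbor in $(V\setminus I)\cup I_\pi$, then (R1) has made all its uncolored neighbors white, so none of them lies in $J$. Otherwise $w$ may receive at most one black neighbor from $J$, which is exactly the condition that $J$-vertices sharing the white neighbor $w$ are not both black, i.e. the adjacency rule of $H_\pi$. This gives a bijection between completions and independent sets of $H_\pi$, including the case $J=\emptyset$, which contributes $1=\noi$ of the empty graph. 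Summing over $\pi$ gives $\noc(G)$, and correctness of the Gaspers--Lee counter finishes the argument.

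The main obstacle is this last bijection. One must ensure that the propagated coloring has been checked for validity on all of $(V\setminus I)\cup I_\pi$, and one must handle the fact that (R1) and (R2) interact: a vertex newly forced black in $I_\pi$ can trigger further (R1) whitenings. I would require the propagation to run to a fixpoint before building $H_\pi$. I would also define $H_\pi$ using the white neighbors that have no black neighbor yet under $\pi'$; this is harmless, since by the fixpoint property such shared neighbors cannot exist otherwise.
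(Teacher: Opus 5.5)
Your proposal is correct and follows the same route as the paper's proof. Both arguments enumerate all valid local colorings of the extracted blocks and stars, which is justified because $P_3$-convexity passes to traces on induced subgraphs. Both then use soundness of (R1)--(R2) as forced consequences, and a bijection between valid completions of $\pi$ and independent sets of $H_\pi$, which rests on $I$ being independent.

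Your version is more careful than the paper's sketch on three points it leaves implicit. You require propagation to run to a fixpoint, followed by a validity check on $(V\setminus I)\cup I_\pi$. You note that the phases need only enumerate a superset of the traces of convex sets. You also observe that two vertices of $J$ can only share a white neighbor that has no black neighbor yet.
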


\begin{proof}
Every major block and every extracted star (claw $K_{1,3}$, $K_{1,4}$ or $K_{1,5}$) is colored in all valid ways.  
Structural deletions preserve bijections between partial colorings of $G-I$ 
and the valid colorings of the removed structures.

Rules (R1)--(R2) capture all forced consequences of $P_3$-convexity, because:
white vertices cannot have two black neighbors, and vertices with two black 
neighbors must be black.

Vertices of $I$ form an independent set, so the only conflicts among them arise 
when two vertices share a white neighbor, precisely encoded in $H_\pi$.  
Thus every valid completion corresponds to an independent set of $H_\pi$, and 
independent sets of $H_\pi$ correspond to valid completions.  
Every convex set is generated exactly once.
\end{proof}

\paragraph{Running time analysis.}

The total cost of each of the three algorithms arising from Algorithm~\ref{alg:second} is:

\begin{itemize}
\item \textbf{Variant A (claws).}
\[
O^*\!\left( 5^{p/3} \cdot 12^{q/4} \cdot 2^{t} \cdot 1.2356^{\,r} \right),
\qquad p+q+t+r=|V|.
\]

After Phases~1--2A, every component of the subgraph induced by $s=t+r$ vertices is a
path or a cycle.  
Except for odd cycles, at least half of each component's vertices lie in $I$.
For an odd cycle $C_{2k+1}$ with $k\ge 2$ (recall the graph is triangle-free),
exactly $k$ of its vertices lie in $I$.  
The worst case occurs when the entire residual is a disjoint union of $C_5$’s, so:

\[
t=\frac{3s}{5},\qquad r=\frac{2s}{5}.
\]

Then:

\[
\begin{aligned}
5^{p/3}\,12^{q/4}\,2^{t}\,1.2356^{r}
&=
5^{p/3}\,12^{q/4}\,(8\cdot 1.2356^{2})^{s/5}
\\
&\approx
5^{p/3}\,12^{q/4}\,(1.6496)^s.
\end{aligned}
\]

The worst case is when $q=|V|$, which yields

\[
\boxed{O^*(1.86121^{\,|V|})}.
\]

\item \textbf{Variant B ($K_{1,4}$).}
\[
O^*\!\left( 5^{p/3} \cdot 21^{q/5} \cdot 2^{t} \cdot 1.2356^{\,r} \right),
\qquad p+q+t+r=|V|, \; r\geq \frac{(t+r)}{3+1}.
\]

So, the worst case is when:

\[
t=\frac{3s}{4},\qquad r=\frac{s}{4}.
\]

Then:

\[
\begin{aligned}
5^{p/3}\,21^{q/5}\,2^{t}\,1.2356^{r}
&=
5^{p/3}\,21^{q/5}\,(8\cdot 1.2356)^{s/4}
\\
&\approx
5^{p/3}\,21^{q/5}\,(1.77314)^s.
\end{aligned}
\]

The worst case is when $q=|V|$, which yields

\[
\boxed{O^*(1.83842^{\,|V|})}.
\]

\item \textbf{Variant C ($K_{1,5}$).}
\[
O^*\!\left( 5^{p/3} \cdot 38^{q/6} \cdot 2^{t} \cdot 1.2356^{\,r} \right),
\qquad p+q+t+r=|V|, \; r\geq \frac{(t+r)}{4+1}.
\]

So, the worst case is when:

\[
t=\frac{4s}{5},\qquad r=\frac{s}{5}.
\]

Then:

\[
\begin{aligned}
5^{p/3}\,38^{q/6}\,2^{t}\,1.2356^{r}
&=
5^{p/3}\,38^{q/6}\,(16\cdot 1.2356)^{s/5}
\\
&\approx
5^{p/3}\,38^{q/6}\,(1.8164)^s.
\end{aligned}
\]

The worst case is when $q=|V|$, which yields

\[
\boxed{O^*(1.83357^{\,|V|})}.
\]

\end{itemize}
If the input graph contains only $O(1)$ disjoint claws (disjoint $K_{1,4}$'s) (disjoint $K_{1,5}$'s), one has $q=O(1)$  (meaning that the corresponding Phase~2 has polynomial cost),
and the running time improves to

\[
O^*(1.71^{|V|})\;(O^*(1.77314^{|V|}))\;(O^*(1.8164^{|V|})).
\]

\medskip

\noindent\textbf{Comment 1.}
Even split graphs containing only \emph{one} vertex–disjoint induced $K_{1,4}$
already yield $\#\mathsf{P}$-completeness for counting $P_3$-convex sets
(Corollary~\ref{cor:oneinducedK14}). Since the classes of graphs with a bounded
number of vertex–disjoint induced $K_{1,4}$'s (or vertex–disjoint induced
$K_{1,5}$'s) are superclasses of this restricted graph family, the problem
remains $\#\mathsf{P}$-complete for them as well. This provides further
justification for the improved exact algorithms discussed above.
 
\medskip
\noindent
Now, with the goal of reducing the cost of Phase~3, we examine whether it is
possible to guarantee, for every triangle-free graph on $n$ vertices and maximum
degree $\Delta = 3$ (respectively, $\Delta = 4$), an independent set strictly
larger than $n/(\Delta+1)$. If such a guarantee were possible, then the bounds
$O^*(1.77314^{|V|})$ (respectively, $O^*(1.8164^{|V|}))$ could be improved
accordingly.

\medskip
\noindent

\medskip
\noindent
A possible alternative route toward such improvements is the following.
We know that:  
(i) in any connected irregular graph, the greedy algorithm that repeatedly
selects a minimum-degree vertex for inclusion in the independent set $I$ and
then deletes it together with its neighbors guarantees $|I| \ge n/\Delta$;  
(ii) if the graph is bipartite, we automatically obtain
$|I| \ge n/2 \ge n/\Delta$.  
Therefore, the only critical cases are $\Delta$-regular,
\emph{non-bipartite}, triangle-free graphs.

\medskip
\noindent
On the other hand, if we apply the greedy algorithm to a connected
$\Delta$-regular, \emph{non-bipartite}, triangle-free graph
$G=(V,E)$, we obtain an independent set of size at least
\[
\frac{|V|-\Delta-1}{\Delta} + 1
= \frac{|V|-1}{\Delta}
\ge \frac{|V|}{\Delta} - 1,
\]
because after the first iteration, every connected component of the remaining
graph is irregular.  
This means that the gap between $n/\Delta$ and the size of the independent set
produced by the greedy algorithm on a $\Delta$-regular,
\emph{non-bipartite}, triangle-free graph $H$ on $n$ vertices is at most $k$,
where $k$ is the number of connected components of $H$.

\medskip
\noindent
This upper bound can be refined further: the gap is at most the number of
connected components $H_i$ of $H$ that fail to obtain an independent set of
size at least $n_i/\Delta$, where $n_i$ is the number of vertices of $H_i$.
If we can prove that every failing component must have at least $n_\Delta$
vertices, then the greedy algorithm yields an independent set $I$ in $H$ of size at least
\[
\frac{n}{\Delta} - \frac{n}{n_\Delta}
    = \frac{n \,(n_\Delta - \Delta)}{\Delta \,n_\Delta}.
\]

\medskip
\noindent
The idea is therefore to determine the smallest connected 
$\Delta$-regular, \emph{non-bipartite}, triangle-free graph
$G=(V,E)$ that fails to obtain an independent set of size at least
$|V|/\Delta$, so that $n_\Delta = |V|$, or alternatively to infer a good lower
bound on $n_\Delta$ through computational exploration.

\subsubsection{A general scheme for $(k,\ell)$-graphs}
\label{subsec:k-l-scheme}

We now describe how the generic independent-set based strategy from
Subsection~\ref{subsec:general-strategy} can be specialised to
$(k,\ell)$-graphs.  
Recall that a graph $G=(V,E)$ is a $(k,\ell)$-graph if its vertex set
admits a partition
\[
V = A_1 \uplus \cdots \uplus A_k \uplus C_1 \uplus \cdots \uplus C_\ell
\]
such that each $A_i$ is an independent set and each $C_j$ is a clique.
Here $k$ and $\ell$ are fixed constants.

In general, recognising $(k,\ell)$-graphs is NP-complete as soon as $k\ge 3$ or $\ell\ge 3$ (see, e.g.,~\cite{arxiv2510}). In contrast, when $k\le 2$ and $\ell\le 2$ the recognition problem is polynomial-time solvable and, moreover, a corresponding partition can also be constructed in polynomial time\textcolor{black}{~\cite{BrandstadtLeS1998, Brandstadt1998, FederHellKleinMotwani1999, arxiv2510}}. Therefore, in what follows we assume that either
\begin{itemize}
    \item we are given as part of the input a $(k,\ell)$-partition of $V$
          (for hard recognition cases with $k\ge 3$ or $\ell\ge 3$), or
    \item we are working in one of the polynomial-time recognisable subclasses with $k\le 2$ and $\ell\le 2$, in which case a
    suitable partition can be found efficiently.
\end{itemize}

\paragraph{color patterns on the clique side.}
Fix a $(k,\ell)$-partition of $V$.  
Consider one of the clique parts $C_j$.  
By $P_3$-convexity, the behaviour of a convex set on $C_j$ is very restricted:
every vertex of $C_j$ is adjacent to all other vertices of $C_j$, so a white
vertex in $C_j$ can have at most one black neighbour inside $C_j$.
Consequently, the induced coloring of $C_j$ in any $P_3$-convex set is one of:
\begin{itemize}
    \item all vertices of $C_j$ are white;
    \item exactly one vertex of $C_j$ is black;
    \item all vertices of $C_j$ are black.
\end{itemize}
Thus $C_j$ admits at most $|C_j|+2$ possible local color patterns compatible
with $P_3$-convexity.  
Since $k$ and $\ell$ are fixed, the total number of global choices for all
clique parts
\[
(C_1,\dots,C_\ell)
\]
is bounded by
\[
\prod_{j=1}^{\ell} (|C_j|+2),
\]
which is polynomial in $n=|V|$.  
Therefore we may enumerate all color patterns on the clique side in
polynomial time and, for each such pattern, treat it as fixed in what
follows.

\paragraph{Reduction to an independent-set problem on the $A_i$'s.}
Once the colors on all cliques $C_1,\dots,C_\ell$ have been fixed, we are
left with the vertices in
\[
A := A_1 \uplus \cdots \uplus A_k,
\]
which induce the disjoint union of $k$ independent sets.  
On this side the main task is to choose additional black vertices while
respecting $P_3$-convexity.  
In particular, our general framework from
Subsection~\ref{subsec:general-strategy} tells us that the running time
depends crucially on the size of an independent set $I$ that we can
guarantee and construct.

A natural and simple choice is to take $I$ as the largest part among
$A_1,\dots,A_k$.  
Let $n_A = |A|$ be the total number of vertices in the independent side.
Then
\[
|I| \;\ge\; \frac{n_A}{k}.
\]

Applying the generic running-time bound \textcolor{black}{(see Subsection~\ref{subse: general running time for stable sets})}
\[
T(G)
 = O^*\!\left(2^{\,n_A-|I|}\cdot 1.2356^{\,|I|}\right)
\]

Clearly, the worst-case is $|I|=\frac{n_A}{k}$ and $n_A=n$.

\[
T(G)
 = O^*\!\left(2^{\,\frac{n_A(k-1)}{k}}\cdot 1.2356^{\,\frac{n_A}{k}}\right)
 = O^*\!\left(2^{\,\frac{n(k-1)}{k}}\cdot 1.2356^{\,\frac{n}{k}}\right)
 = O^*\!\left(
      \bigl(2^\frac{k-1}{k}\cdot 1.2356^\frac{1}{k}\bigr)^n
   \right).
\]

%
Since the number of color patterns on the cliques is only polynomial,
this bound describes the overall running time on $(k,\ell)$-graphs when
$k$ and $\ell$ are fixed constants.

\paragraph{Range of parameters for which the scheme is useful.}
The base
\[
\beta_k = 2^{(k-1)/k}\cdot 1.2356^{1/k}
\]
is a non-decreasing function of $k$, and for large $k$ the improvement
over the general bound $O^*(1.83357^n)$ disappears. Using the bound above, one checks that the resulting
base is strictly below~$1.83357$ for $k\le 5$.

We emphasise that several important graph classes appear as special
instances of $(k,\ell)$-graphs.  
For example, split graphs are exactly the $(1,1)$-graphs, and bipartite
graphs correspond to $(2,0)$-graphs.
\section*{Acknowledgment}
Mitre C. Dourado is partially supported by Conselho Nacional de Desenvolvimento Cient\'ifico e Tecnol\'ogico (CNPq), Brazil, grant numbers 403601/2023-1 and 305141/2024-4. Luciano Grippo is partially supported by CONICET (PIP 2022-2024 GI - 11220210100392CO). Min Chih Lin is partially supported by CONICET (PIP 11220200100084CO), UBACyT (20020220300079BA and 20020190100126BA), and ANPCyT (PICT-2021-IA-00755). F\'abio Protti is partially supported by Conselho Nacional de Desenvolvimento Científico e 609 Tecnológico (CNPq), Brazil, grant numbers 304643/2025-4 and 409140/2025-2.


\end{document}